\newcommand{\links}{[\![}
\newcommand{\rechts}{]\!]}
\begin{document}

\title{Tilings and Submonoids of Metabelian Groups}
\author{Markus Lohrey\inst{1} \and Benjamin Steinberg\inst{2,}\thanks{The authors would like to
acknowledge the support of DFG Mercator program.  The second author is also supported by an NSERC grant.}
\institute{Universit\"at
Leipzig, Institut f\"ur Informatik, Germany
 \and
School of Mathematics and Statistics,
Carleton University, ON, Canada\\
\email{lohrey@informatik.uni-leipzig.de,
bsteinbg@math.carleton.ca}}}

\maketitle

\begin{abstract}
In this paper we show that membership in finitely generated submonoids is
undecidable for the free metabelian group of rank $2$ and for the wreath
product $\mathbb Z\wr (\mathbb Z\times \mathbb Z)$.  We also show that
subsemimodule membership is undecidable for finite rank free
$(\mathbb Z\times \mathbb Z)$-modules.  The proof involves an encoding
of Turing machines via tilings.  We also show that rational subset
membership is undecidable for two-dimensional lamplighter groups.
\end{abstract}

\section{Introduction}
Two of the classical group theoretic decision problems are  the word
problem and the generalized word problem.  Suppose $G$ is a finitely generated
group with finite generating set $\Sigma$ and put $\Sigma^{\pm}=\Sigma\cup
\Sigma^{-1}$.  Let $\pi\colon (\Sigma^{\pm})^*\to G$ be the canonical
projection from the free monoid on $\Sigma^{\pm}$ onto $G$.
The word problem asks to determine algorithmically given an input word $w\in
(\Sigma^{\pm})^*$, whether $\pi(w)=1$.  An algorithm for the (uniform)
generalized word problem takes as input finitely many words
$w,w_1,\ldots, w_n\in (\Sigma^{\pm})^*$ and answers whether
$\pi(w)\in \langle \pi(w_1),\ldots,\pi(w_n)\rangle$.  Two more general
problems that have received some attention in recent years are
the submonoid membership \cite{LohSte08,MaMeSu05} and the rational subset membership
problems~\cite{Gilman96,KaSiSt06,LohSte08,Ned00,Rom99}.

The (uniform) submonoid membership problem for $G$ takes as input a
finite list of words $w,w_1,\ldots, w_n\in (\Sigma^{\pm})^*$ and asks the
question is $\pi(w)\in \{\pi(w_1),\ldots,\pi(w_n)\}^*$ (where if $X\subseteq
G$, then $X^*$ denotes the submonoid generated by $G$).  For example, $g\in G$ has
finite order if and only if $g^{-1}\in g^*$ and so decidability of membership
in cyclic submonoids allows one to compute the order of an element.  Of
course, decidability of submonoid membership implies decidability of
the generalized word problem.  In~\cite{LohSte08} the authors provided the first
example of a group with decidable generalized word problem and undecidable
submonoid membership problem, namely the right-angled Artin group
(or graph group) whose associated graph is a path of length $3$.

The rational subset membership problem for $G$ is the following algorithmic
problem:  given as input a word $w\in(\Sigma^{\pm})^*$ and a finite automaton
$\mathscr A$ over $(\Sigma^{\pm})^*$, determine whether
$\pi(w)\in \pi(L(\mathscr A))$.  Of course, this is the most general
of the problems we have been discussing, and is therefore the
easiest to prove undecidable.  A subset of the group $G$ is called
\emph{rational} if it is of the form $\pi(L(\mathscr A))$ for
some finite automaton $\mathscr A$.  This definition does not
depend on the generating set.   The study of rational subsets of groups goes back a long way, beginning with free groups~\cite{Benois69} and commutative groups~\cite{EiSchu69}.  Other early references include~\cite{AniSeif75,Ber79}. The largest known class of groups with decidable rational subset membership problem can be found in~\cite{LohSte08}, where one also finds a complete classificaiton of graph groups with decidable rational subset membership problem.

It is easy to see that if $L_1,L_2$ are rational subsets of $G$,
then $L_1\cap L_2\neq \emptyset$ if and only if $1\in L_1L_2^{-1}$
and the latter product is a rational subset of $G$.  There are many
monoids embeddable in groups with undecidable rational subset
intersection emptiness problems; one such example was exploited in~\cite{LohSte08}.
An easier example is the following.  Let $M$ be a free monoid on two-generators.
Then the undecidability of the Post correspondence problem implies
that there is a fixed finitely generated submonoid $N$ of $M\times M$
so that it is undecidable given an element $(u,v)\in M\times M$ whether
$(u,v)N\cap \Delta\neq \emptyset$ where $\Delta$ is the diagonal submonoid
of $M\times M$.  Therefore, if $G$ is a finitely generated group containing
$M\times M$, then $(u,v)N\cap \Delta\neq \emptyset$ if and only if
$(u,v)\in \Delta N^{-1}$ and so rational subset membership is undecidable
for a fixed rational subset of such a group.  For instance, Thompson's group
$F$ contains a direct product of two free monoids of rank $2$ and so
has undecidable rational subset membership problem.
It was shown by \cite{Rom99} that, for any nilpotency class $c\geq 2$,
there is a rank $r$ so that the free nilpotent group of class $c$ and
rank $r$ has undecidable rational subset membership problem
via an encoding of Hilbert's tenth problem. On the other hand, from
the subgroup separability of polycyclic groups \cite{Mal83} it follows that the latter
have a decidable generalized word problem. A more practical algorithm
can be found in \cite{AvWi89}.

It is well known that free solvable groups of rank $2$ and derived length at
least $3$ have undecidable generalized word problem \cite{Umi95}.
On the other hand, all finitely generated metabelian groups have a decidable generalized
word problem~\cite{Rom74,Rom80}.  It is therefore natural to consider metabelian
groups for the submonoid and rational subset membership problems.
In this paper we show that there is a fixed finitely generated submonoid
of the free metabelian group of rank $2$ with undecidable membership problem.
The same result is also established for the wreath product
$\mathbb Z\wr (\mathbb Z\times \mathbb Z)$.  The proof is via a
reduction to the membership problem for finitely generated subsemimodules
of free $(\mathbb Z\times \mathbb Z)$-modules of finite rank.
This latter problem we prove undecidable by interpreting it as
a particular tiling problem that we show to be undecidable
via a direct encoding of a Turing machine.

The paper ends by showing that membership in rational subsets of
the metabelian group
$\mathbb Z/n\mathbb Z\wr (\mathbb Z\times \mathbb Z)$ is undecidable
using essentially the same tiling problem.  It is left open whether this group
has a decidable submonoid membership problem.  At the moment, there
are no examples of groups for which the submonoid membership problem
is decidable, but the rational subset membership problem is
undecidable. Some further algorithmic results concerning metabelian groups
can be found in \cite{BaCaRoDe94,MyRoUsVe08,Rom79}.

\section{The subsemimodule membership problem}
Fix a group $G$.  Recall that a (left) \emph{$G$-module} is an abelian group $M$
equipped with a left action of $G$ by  automorphisms.  Equivalently, a
$G$-module is a module for the group ring $\mathbb ZG$.  One can extend
this definition to obtain the notion of a $G$-semimodule.
By a \emph{$G$-semimodule}, we mean a commutative monoid $N$
equipped with a left action of $G$ by automorphisms.  Equivalently,
we are speaking of semimodules for the group semiring $\mathbb NG$.
If $X$ is a subset of a $G$-module, then $\mathbb ZG\cdot X$ will
denote the submodule generated by $X$ and $\mathbb NG\cdot X$ will denote the subsemimodule generated by $X$.

Let us now formulate the membership problem for semimodules.
Informally, the problem is given a fixed finitely generated $G$-module
$M$, can one determine algorithmically membership in finitely generated
subsemimodules of $M$.  Of course, to make this a well-defined algorithmic
problem we need to describe how to represent elements of the module.

Assume now that our group $G$ is generated by a finite set
$\Sigma$ and denote by $\Sigma^{\pm}$ the set $\Sigma\cup \Sigma^{-1}$.
Let $\mathbb Z\Sigma^{\pm}$ be the ring of integral polynomials
in non-commuting variables $\Sigma^{\pm}$ (that is the free ring on
$\Sigma^{\pm}$).
There is as usual a canonical surjection
$\pi\colon \mathbb Z\Sigma^{\pm}\to \mathbb ZG$ induced by evaluating words in $G$.

Let $M$ be a finitely generated $G$-module with generating set $B$.
We can view it as a $\mathbb Z\Sigma^{\pm}$-module via $\pi$.
Let $\widetilde M$ be the free  $\mathbb Z\Sigma^{\pm}$-module
on $B$.
Then there is a canonical projection $\rho\colon \widetilde M\to M$ sending
$B$ to $B$.
The idea then is that we can represent elements of $M$ by elements of
$\widetilde M$.
The (uniform) \emph{subsemimodule membership problem} then takes as input a finite
subset $F$ of $\widetilde M$ and an element $x\in \widetilde M$.
The output is whether $\rho(x)\in \mathbb NG\cdot \rho(F)$.
It should be noted that for $G=1$ the subsemimodule membership problem
corresponds to integer programming, which is a classical NP-complete
problem.

Our interest in the subsemimodule membership problem stems from an easy
encoding of it into the submonoid membership problem  for semidirect products.

\begin{lemma}\label{generators}
 Let $G$ be a group with generating set $\Sigma$ and let $M$ be a
$G$-semimodule generated by a subset $B$.  Then the semidirect product
$M\rtimes G$ is generated as a monoid by $\Sigma^{\pm}\cup B$ via the map
$a\mapsto (0,a)$ for $a\in \Sigma^{\pm}$ and $b\mapsto (b,1)$ for $b\in B$.
In particular, if $G$ and $M$ are finitely generated, then so is $M\rtimes G$.
\end{lemma}

\begin{proof}
 As a monoid $M$ is generated by all elements of the form $gb$ with $g\in G$,
$b\in B$.  But $(0,g)(b,1)(0,g^{-1})= (gb,1)$.  It follows that $\Sigma^{\pm}\cup B$
is a monoid generating set for $M\rtimes G$.
\qed
\end{proof}

In light of Lemma~\ref{generators}, we immediately obtain the following result.

\begin{proposition}\label{undecmon}
 Let $G$ be a finitely generated group and $M$ a finitely generated $G$-module
with an undecidable subsemimodule membership problem (for a fixed subsemimodule
$N$).
Then $M\rtimes G$ has an undecidable submonoid membership problem
(for the fixed submonoid $N\rtimes G$).
\end{proposition}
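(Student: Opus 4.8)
The plan is to reduce the (undecidable) subsemimodule membership problem for the fixed $N$ to the submonoid membership problem for $N\rtimes G$, using the monoid generating set supplied by Lemma~\ref{generators}. First I would record that $N\rtimes G$ really is a submonoid: since $N$ is a $G$-subsemimodule of $M$, it is $G$-invariant, so for $(n_1,g_1),(n_2,g_2)\in N\rtimes G$ the product $(n_1+g_1n_2,\,g_1g_2)$ again lies in $N\rtimes G$. Writing $N=\mathbb NG\cdot C$ for a fixed finite $C\subseteq M$, the same computation $(0,g)(c,1)(0,g^{-1})=(gc,1)$ used in Lemma~\ref{generators} shows that $\Sigma^{\pm}\cup C$ is a fixed finite monoid generating set for $N\rtimes G$ inside $M\rtimes G$. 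This makes ``membership in $N\rtimes G$'' a well-posed instance of the submonoid membership problem with a fixed submonoid.

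The heart of the reduction is the equivalence
\begin{equation*}
\rho(x)\in N \quad\Longleftrightarrow\quad (\rho(x),1)\in N\rtimes G,
\end{equation*}
which is immediate from the definition of the semidirect product, the second coordinate being forced to be $1\in G$. So it suffices, given an input $x\in\widetilde M$ of the subsemimodule problem, to compute effectively a word $w$ over $\Sigma^{\pm}\cup B$ whose image in $M\rtimes G$ is $(\rho(x),1)$; then $x$ is a positive instance of the subsemimodule problem if and only if $w$ is a positive instance of the submonoid problem, and undecidability transfers.

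To build $w$, expand $x=\sum_i p_ib_i$ with $p_i\in\mathbb Z\Sigma^{\pm}$ and $b_i\in B$, and write each $p_i$ as a signed sum of monomials $p_i=\sum_j k_{ij}u_{ij}$ with $u_{ij}\in(\Sigma^{\pm})^*$ and $k_{ij}\in\mathbb Z$, so that $\rho(x)=\sum_{i,j}k_{ij}\,\pi(u_{ij})b_i$. Each factor $\pi(u_{ij})b_i$ is realized in $M\rtimes G$ by the conjugate word $u_{ij}\,b_i\,u_{ij}^{-1}$, which maps to $(\pi(u_{ij})b_i,1)$ (here $u_{ij}^{-1}$ is again a word over $\Sigma^{\pm}$ since $\Sigma^{\pm}$ is closed under inversion). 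Because the $M$-coordinate is abelian and every such conjugate word has trivial $G$-coordinate, the concatenation of $|k_{ij}|$ copies of these words, over all $i,j$, has $M$-coordinate exactly $\sum_{i,j}k_{ij}\,\pi(u_{ij})b_i=\rho(x)$ and trivial $G$-coordinate, as required.

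The one point needing care---and the main obstacle---is the treatment of negative coefficients $k_{ij}<0$: to realize $-\pi(u_{ij})b_i$ inside the monoid we must be able to name $-b_i$ among the generators. This is handled by choosing the semimodule generating set $B$ to be closed under negation (equivalently, taking $B=B_0\cup(-B_0)$ for a module generating set $B_0$), so that $M=\mathbb NG\cdot B$ and $-b_i\in B$; then a negative monomial is realized by the word $u_{ij}(-b_i)u_{ij}^{-1}$. With this convention the construction of $w$ is effective and the reduction is complete.
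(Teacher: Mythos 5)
Your proposal is correct and follows essentially the same route as the paper: reduce via the equivalence $\rho(x)\in N \Leftrightarrow (\rho(x),1)\in N\rtimes G$, and encode module elements effectively as products of conjugate words $u\,b\,u^{-1}$ with trivial $G$-coordinate. The only deviation is your treatment of negative coefficients, which is a non-issue in the paper's formulation: the input to the submonoid membership problem is a word over the \emph{group} alphabet $((\Sigma\cup B)^{\pm})^*$, so the paper simply writes $(wbw^{-1})^n$ for $n\in\mathbb Z$, using the formal inverse letter $b^{-1}$ when $n<0$; your negation-closure of $B$ achieves the same thing (the letter $-b$ maps to $(b,1)^{-1}$) and is therefore a harmless but unnecessary detour.
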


\begin{proof}
The membership of $(m,1)$ in $N\rtimes G$ is evidently equivalent
to the membership of $m\in N$.  Let us just mention how one
effectively transforms input from the subsemimodule problem to
the submonoid membership problem.    Suppose $\Sigma$ is a generating
set for $G$ and $B$ is a generating set for $M$.  Let $\widetilde M$ and
$\rho$ be as before Lemma~\ref{generators}.  Then, for $w\in
(\Sigma^{\pm})^*$, $b\in B$ and $n\in \mathbb Z$, the element
$(\rho(nwb),1)$ is represented in the (group) generating set $\Sigma\cup B$
for $M\rtimes G$ by the word $(wbw^{-1})^n$.  In this way, we
can encode representatives of the module as words in $((\Sigma\cup
B)^{\pm})^*$.
\qed
\end{proof}

If $G$ is a group, the semidirect product $\mathbb ZG\rtimes G$ is the same thing
as the (restricted) wreath product $\mathbb Z\wr G$.  Now if $H$ is a subgroup
of $G$ of index $m$, then it is well known that $\mathbb ZG$ is a free
$\mathbb ZH$-module of rank $m$~\cite{Bro94}.  More precisely, if $T=\{g_1,\ldots,g_m\}$
is a complete set of right coset representatives of $H$ in $G$, then $T$
is a basis for $\mathbb ZG$ as a free left $\mathbb ZH$ module.
Consequently, we have the following lemma.

\begin{lemma}\label{embeds}
Suppose that $H$ is a subgroup of $G$ of index $m$ and $M$ is a
free $\mathbb ZH$-module of rank at most $m$.  Then $M\rtimes H$
embeds as a subgroup of $\mathbb Z\wr G$.
\end{lemma}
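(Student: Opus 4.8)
The plan is to exploit the fact just recalled, that $\mathbb ZG$ is a free left $\mathbb ZH$-module of rank $m$ with basis $T=\{g_1,\dots,g_m\}$ a complete set of right coset representatives of $H$ in $G$. Since $M$ is free of rank at most $m$ over $\mathbb ZH$, I would fix a basis of $M$ and map it injectively onto a subset of $T$; extending $\mathbb ZH$-linearly yields a $\mathbb ZH$-module monomorphism $\iota\colon M\hookrightarrow \mathbb ZG$. In other words, the work of the lemma is essentially already contained in the structure theorem for $\mathbb ZG$ over $\mathbb ZH$; what remains is to package it at the level of semidirect products.

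Recalling that $\mathbb Z\wr G=\mathbb ZG\rtimes G$, where $G$ acts on $\mathbb ZG$ by left multiplication, I would then define
\[
\phi\colon M\rtimes H\to \mathbb ZG\rtimes G=\mathbb Z\wr G,\qquad \phi(m,h)=(\iota(m),h),
\]
using the inclusion $H\hookrightarrow G$ in the second coordinate. To check that $\phi$ is a homomorphism, I compute $\phi\big((m_1,h_1)(m_2,h_2)\big)=(\iota(m_1)+\iota(h_1\cdot m_2),\,h_1h_2)$ and compare it with $\phi(m_1,h_1)\phi(m_2,h_2)=(\iota(m_1)+h_1\cdot\iota(m_2),\,h_1h_2)$; equality reduces to the single identity $\iota(h\cdot m)=h\cdot\iota(m)$ for $h\in H$ and $m\in M$. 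Injectivity of $\phi$ is then immediate from the injectivity of $\iota$ and of $H\hookrightarrow G$.

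The one point requiring care, and the crux of the argument, is the compatibility of the two actions appearing in this equivariance identity. On the left, $h\cdot m$ is the $\mathbb ZH$-module action of $h$ on $M$; on the right, $h\cdot\iota(m)$ is the action of $h\in H\le G$ on $\mathbb ZG$ inside the wreath product, which is left multiplication by $h$. The key observation is that the free $\mathbb ZH$-module structure on $\mathbb ZG$ coming from the basis $T$ is induced by exactly this same left multiplication: for $x=\sum_i \lambda_i g_i$ with $\lambda_i\in\mathbb ZH$ one has $hx=\sum_i (h\lambda_i)g_i$. Hence the restriction to $H$ of the wreath-product action on $\mathbb ZG$ coincides with the $\mathbb ZH$-module action, so the $\mathbb ZH$-linear map $\iota$ is automatically $H$-equivariant and the desired identity holds. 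This shows $\phi$ is an injective homomorphism, embedding $M\rtimes H$ as a subgroup of $\mathbb Z\wr G$.
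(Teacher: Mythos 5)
Your proof is correct and follows essentially the same route as the paper: the paper's two-line argument embeds $M$ into $\mathbb ZG$ using the freeness of $\mathbb ZG$ as a $\mathbb ZH$-module on the coset representatives and then observes $M\rtimes H\leq \mathbb ZG\rtimes H\leq \mathbb ZG\rtimes G=\mathbb Z\wr G$. You have simply made explicit the $H$-equivariance check that the paper leaves implicit, which is a worthwhile verification but not a different approach.
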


\begin{proof}
Clearly $\mathbb ZG\rtimes H\leq \mathbb ZG\rtimes G=\mathbb Z\wr G$.
Since $\mathbb ZG$ is a free $\mathbb ZH$-module of rank $m$, it follows $M\leq
\mathbb ZG$ and so we are done.
\qed
\end{proof}

The main technical result of this paper is the following theorem.

\begin{theorem}\label{mainund}
 There is a free $(\mathbb Z\times \mathbb Z)$-module of finite rank
with an undecidable subsemimodule membership problem for a fixed finitely generated subsemimodule.
\end{theorem}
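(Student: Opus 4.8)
The plan is to reduce the halting problem of a fixed universal Turing machine to membership in a fixed finitely generated subsemimodule, routing the reduction through a two-dimensional tiling problem. Write $G=\mathbb{Z}\times\mathbb{Z}=\langle x\rangle\times\langle y\rangle$, so that $\mathbb{Z}G=\mathbb{Z}[x^{\pm 1},y^{\pm 1}]$ is the ring of Laurent polynomials in two commuting variables and $\mathbb{N}G$ is the subsemiring of those with nonnegative coefficients. I work in the free module $(\mathbb{Z}G)^k$ and identify the monomial $x^iy^j$ with the lattice point $(i,j)\in\mathbb{Z}^2$; multiplying a generator by $x^iy^j\in\mathbb{N}G$ will mean ``place a copy of the corresponding tile at position $(i,j)$'', and the constraint that only nonnegative integer coefficients are allowed will be what forbids cheating. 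Fix once and for all a universal machine $U$ with undecidable halting problem; its (finite) data will be compiled into a fixed generating set $F$, while the input word $w$ will be compiled into the target element $x(w)$. Since $U$ halts on $w$ if and only if $x(w)\in\mathbb{N}G\cdot\rho(F)$ (for the fixed subsemimodule $N=\mathbb{N}G\cdot\rho(F)$), undecidability of the halting problem gives undecidability of membership.

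First I would set up the tiling problem in the standard way. Using Wang-type tiles with colored edges, the \emph{space}$\times$\emph{time} diagram of a computation of $U$ is a tiling of a region of $\mathbb{Z}^2$: the horizontal axis indexes tape cells, the vertical axis indexes steps, the tile at $(i,t)$ records the contents of cell $i$ at time $t$ together with head/state information, horizontal adjacency propagates the (unique) head and its signals, and vertical adjacency enforces a single transition of $U$. The bottom row carries the initial configuration (the input $w$ at positions $(0,0),\dots,(|w|-1,0)$, blanks elsewhere) and halting tiles are the only ones allowed to close the diagram off at the top. In this encoding $U$ halts on $w$ if and only if there is a \emph{finite} valid tiling, so the tiling problem is recursively enumerable with halting computations as its witnesses; this is the classical Turing-machine-to-tiling construction.

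Next I would compile tiles into generators. For each tile type $T$, define $g_T\in(\mathbb{Z}G)^k$ whose basis components are indexed by the edge colors (a component $H^c$ for each horizontal color $c$, a component $V^c$ for each vertical color, plus a few bookkeeping components), and arrange that $g_T$ contributes a $+$-term on the component of each color it emits and a matching $-$-term, shifted by $x^{-1}$ or $y^{-1}$, for the color it demands from its neighbor, so that after placing $T$ at $(i,t)$ and its neighbor at the adjacent cell the two terms land on the same monomial of the same component and cancel \emph{exactly} when the colors agree. Outward-facing edges of boundary tiles emit nothing, so the outer frame requires no cancellation. The target $x(w)$ is then the element that is zero on every color component---forcing every interior edge to match---carries the symbols of $w$ on the initial-tape components along the bottom row, contains a seed token at the origin (to force the diagram to be nonempty and anchored), and, through the choice of which tiles may appear on the top frontier, forces the diagram to terminate only at a halting configuration of $U$; thus $x(w)$ is manifestly constructible uniformly from $w$, and the rank $k$ is finite because $U$ has finitely many states, tape symbols, and hence tiles and colors.

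The easy half is that a halting computation yields a tiling, and the tiling yields an explicit representation $\sum_T c_T g_T=x(w)$ with $c_T\in\mathbb{N}G$ recording one unit per placed tile. \textbf{The main obstacle is the converse, rigidity: to show that every representation with nonnegative coefficients is forced to come from an honest tiling.} Here is where the restriction to $\mathbb{N}G$ does the work. Because all emissions have unit coefficients and nonnegative combinations cannot cancel two positive terms, the seed token pins the origin tile to multiplicity one, and matching then propagates this unit multiplicity outward, simultaneously forbidding overlaps (a stray extra tile would leave an unmatched demand on some color component, contradicting that $x(w)$ vanishes there) and holes (a missing neighbor would leave an unmatched emission). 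Since coefficients are polynomials, the representation has finite support, so the forced diagram must close up after finitely many tiles, which the tile set permits only when $U$ reaches a halting configuration; one must also rule out spurious disconnected islands and any accidental cancellation across unrelated positions, and verifying this rigidity carefully is the technical heart of the argument. With it in hand, $x(w)\in N$ if and only if $U$ halts on $w$, proving the theorem; combined with Proposition~\ref{undecmon} and Lemma~\ref{embeds} this is what will yield the announced undecidability results for the associated metabelian groups.
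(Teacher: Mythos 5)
Your overall strategy is exactly the paper's: encode a Turing machine's space--time diagram as a Wang-type tiling, realize each tile as an element of a free $(\mathbb Z\times\mathbb Z)$-module whose basis is indexed by edge orientations and colors (rank $2|C|$, finite because the machine is fixed), with $+1$ entries on the north/east colors and $-1$ entries on the south/west colors so that adjacent matching edges cancel, and observe that ``$-f_w$ lies in the subsemimodule generated by the tiles'' is precisely ``there is a tiling sum cancelling the input row.'' The easy direction (halting computation $\Rightarrow$ membership) is also handled the same way. So the architecture is right.

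However, there is a genuine gap: the converse direction, which you yourself flag as ``the technical heart,'' is asserted rather than proved, and the heuristic you offer for it does not work as stated. You claim that ``nonnegative combinations cannot cancel two positive terms,'' so a seed token at the origin pins multiplicities to one and this propagates outward. But coefficients in $\mathbb N[\mathbb Z\times\mathbb Z]$ permit arbitrary positive multiplicities, and two $+1$'s on an edge-color are perfectly well cancelled by two $-1$'s from a doubled neighbour; nothing local at a seed prevents a position from receiving two tiles, nor does it exclude a doubled or phantom configuration stacked far from the origin. The paper has to prove all of this: Claim 1 (all tiles lie in a quadrant) via a minimality argument in the componentwise partial order using the structural fact that \emph{every} tile has a non-distinguished color on its south or west edge; Claims 2--4 forcing the bottom boundary row; Claim 5 (no position receives two tiles), whose proof is anchored not at a seed but at the fact that each edge of the target $f_w$ carries its color with coefficient exactly one, propagated upward by induction on the minimal $y$; and finally an induction showing that rows $1,2,\dots$ are forced to reproduce the successive configurations $C_1,C_2,\dots$, so that finiteness of the support of the representation forces the machine to halt. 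Without these arguments --- which depend on specific design choices in the tile set (e.g.\ which edges may carry the blank color $c_0$, uniqueness of tiles with a given west or east color) that your compilation step leaves unspecified --- the reduction is not established. In short: correct plan, but the rigidity lemma that makes the reduction sound is missing, and it is the bulk of the actual proof.
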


As a corollary, we obtain that $\mathbb Z\wr (\mathbb Z\times \mathbb Z)$
has an undecidable submonoid membership problem.  This should be contrasted
with the generalized word problem, which is solvable in any finitely generated metabelian
group~\cite{Rom74,Rom80}.  It should be noted that the submodule membership problem
is decidable for free $(\mathbb Z\times \mathbb Z)$-modules~\cite{Sims}, and this is
what underlies the positive solution to the generalized word problem
for metabelian groups in \cite{Rom74,Rom80}.

\begin{corollary}\label{zwrz2undec}
 The submonoid membership problem is undecidable for
$\mathbb Z\wr (\mathbb Z\times \mathbb Z)$ for a fixed finitely generated submonoid.
\end{corollary}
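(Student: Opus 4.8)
The plan is to chain the three preceding results. Let $M$ be the free $(\mathbb{Z}\times\mathbb{Z})$-module of finite rank $r$ furnished by Theorem~\ref{mainund}, together with its fixed finitely generated subsemimodule $N$ whose membership problem is undecidable. First I would apply Proposition~\ref{undecmon} with $G=\mathbb{Z}\times\mathbb{Z}$: this immediately yields that the semidirect product $M\rtimes(\mathbb{Z}\times\mathbb{Z})$ has an undecidable submonoid membership problem, the offending submonoid being the fixed, finitely generated monoid $N\rtimes(\mathbb{Z}\times\mathbb{Z})$. It then remains only to place $M\rtimes(\mathbb{Z}\times\mathbb{Z})$ inside $\mathbb{Z}\wr(\mathbb{Z}\times\mathbb{Z})$ as a subgroup in an effective way.

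Here is where Lemma~\ref{embeds} enters, but one must first arrange the rank condition. The key observation is that $\mathbb{Z}\times\mathbb{Z}$ contains finite-index subgroups isomorphic to itself: for any $n\geq 1$ the subgroup $H=n\mathbb{Z}\times n\mathbb{Z}$ has index $n^2$ in $G=\mathbb{Z}\times\mathbb{Z}$ and is again free abelian of rank $2$. Choosing $n$ so that $n^2\geq r$ and transporting the module structure on $M$ across a fixed isomorphism $H\cong\mathbb{Z}\times\mathbb{Z}$, I may regard $M$ as a free $\mathbb{Z}H$-module of rank $r\leq [G:H]$. Lemma~\ref{embeds} then gives an explicit embedding of $M\rtimes H$ as a subgroup of $\mathbb{Z}\wr G=\mathbb{Z}\wr(\mathbb{Z}\times\mathbb{Z})$, and under the chosen isomorphism $M\rtimes H\cong M\rtimes(\mathbb{Z}\times\mathbb{Z})$ transports the undecidable instance along.

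Finally I would check that undecidability of submonoid membership survives passage to the overgroup. If $K$ is a finitely generated group embedded in a group $L$ by a computable map, then for any finite $X\subseteq K$ the submonoid of $L$ generated by $X$ is contained in $K$ and coincides with the submonoid of $K$ generated by $X$, since $K$ is closed under multiplication and contains $1$; hence membership of an element of $K$ in $\langle X\rangle$ is literally the same question in $K$ and in $L$. Because the embedding of Lemma~\ref{embeds} and the isomorphism $H\cong\mathbb{Z}\times\mathbb{Z}$ are both explicit, each generator of $M\rtimes H$ can be rewritten effectively as a word in a fixed generating set of $\mathbb{Z}\wr(\mathbb{Z}\times\mathbb{Z})$, so an instance of the submonoid membership problem over $M\rtimes(\mathbb{Z}\times\mathbb{Z})$ translates algorithmically into one over $\mathbb{Z}\wr(\mathbb{Z}\times\mathbb{Z})$ with the fixed image submonoid, completing the reduction.

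The only genuinely delicate point, and the one I would be most careful about, is the rank bookkeeping: Theorem~\ref{mainund} produces a module of some specific finite rank $r$, whereas Lemma~\ref{embeds} can only absorb a module of rank at most the index $[G:H]$. The self-embedding trick resolves this, but it is worth emphasizing that it works precisely because $\mathbb{Z}\times\mathbb{Z}$ admits proper finite-index subgroups that are again isomorphic to $\mathbb{Z}\times\mathbb{Z}$ and of arbitrarily large index; absent this self-similarity of the acting group, one would need a separate argument to accommodate the rank.
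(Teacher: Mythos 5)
Your proposal is correct and follows essentially the same route as the paper: Theorem~\ref{mainund}, then Proposition~\ref{undecmon}, then Lemma~\ref{embeds} applied to a finite-index self-isomorphic subgroup of $\mathbb Z\times\mathbb Z$ (the paper takes $m\mathbb Z\times\mathbb Z$ of index exactly $m$, where you take $n\mathbb Z\times n\mathbb Z$ of index $n^2\geq r$ --- an immaterial difference). Your extra remarks on the rank bookkeeping and on why submonoid membership transfers verbatim to the overgroup are points the paper leaves implicit, and they are handled correctly.
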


\begin{proof}
By Theorem~\ref{mainund}, there is a free $(\mathbb Z\times \mathbb Z)$-module
$M$ of some rank $m$ with undecidable subsemimodule membership problem
for a fixed subsemimodule.  Proposition~\ref{undecmon} then implies that
$M\rtimes (\mathbb Z\times \mathbb Z)$ has undecidable submonoid membership
for a fixed finitely generated submonoid.  Now $\mathbb Z\times \mathbb Z$
has a subgroup of index $m$ isomorphic to it, e.g.,
$m\mathbb Z\times \mathbb Z$.  Lemma~\ref{embeds} then implies
$M\rtimes (\mathbb Z\times \mathbb Z)$ embeds in
$\mathbb Z\wr (\mathbb Z\times \mathbb Z)$, completing the proof.
\qed
\end{proof}

Recall that a group $G$ is metabelian if it is solvable
of derived length $2$, or equivalently if commutators in $G$
commute. Our next goal is to show that the free metabelian group of
rank $2$ has an undecidable submonoid membership problem for a fixed submonoid.  Since
it is known that free non-cyclic solvable groups of derived length
$3$ or higher have undecidable generalized word problem~\cite{Umi95},
this will show that the submonoid membership problem is
undecidable for free non-abelian solvable groups of any derived length.

We need to recall a description of the free metabelian
group of rank $2$, which is a special case of a more general
result of Almeida~\cite{Alm89}; see also~\cite{MyRoUsVe08}.
In what follows we will work with the {\em Cayley-graph} $\Gamma$ of the group
$\mathbb{Z} \times \mathbb{Z}$.
More precisely, the set of vertices of $\Gamma$ is $\mathbb{Z} \times \mathbb{Z}$ and the
set of (undirected) {\em edges} is
\[\mathscr{E} = \{ \{ (p,q), (r,s) \} \mid p,q,r,s \in \mathbb{Z},
|u-x|+|v-y|=1\}.\]
For $e = \{ (p,q), (r,s) \} \in \mathscr{E}$ and $(a,b) \in \mathbb{Z} \times
\mathbb{Z}$,
we define the translation $e+(a,b) = \{ (p+a,q+b), (r+a,s+b)\} \in \mathscr{E}$.
Let $\Sigma=\{x,y\}$ and label edges in $\Gamma$ of the form
$\{ (p,q), (p+1,q) \}$ (resp. $\{ (p,q), (p,q+1) \}$)
with $x$ (resp. $y$); the reverse edges are labeled with $x^{-1}$
(resp. $y^{-1}$). Let $M_2$ be the free metabelian group generated by $\Sigma$.
Then two words $u,v$ in $(\Sigma^{\pm})^*$ represent the
same element of $M_2$ if and only if they map to the same
element of the free abelian group of rank $2$ and the paths
traversed by $u$ and $v$ in the Cayley graph
$\Gamma$ of $\mathbb Z\times \mathbb Z$ use each edge the
same number of times (where backwards traversals are counted
negatively).  Equivalently, a word $w$ represents the identity
in $M_2$ if and only if it labels a closed path in $\Gamma$ at the origin that
maps to the trivial element of the homology group $H_1(\Gamma)$.  A word $w$ represents an element
of the commutator subgroup $[M_2,M_2]$ if and only if it reads a closed loop in $\Gamma$ at the origin.  Thus
$[M_2,M_2]$ can be identified with
$H_1(\Gamma)$ as a $(\mathbb Z\times \mathbb Z)$-module by
mapping a word $w$ reading a loop at the origin to the element of
$H_1(\Gamma)$ represented by that loop. As a
$(\mathbb Z\times \mathbb Z)$-module, it is free of rank $1$ generated by the commutator
$[x,y]=xyx^{-1}y^{-1}$, which corresponds to
\[c=\{(0,0),(1,0)\} +  \{(1,0),(1,1)\} - \{(1,1),(0,1)\}-\{(0,1),(0,0)\}\]
under our identification of $[M_2,M_2]$ with $H_1(\Gamma)$.
The easiest way to see that $c$ is a free generator is to view $\Gamma$
as the $1$-skeleton of $\mathbb R^2$ with the cell complex structure whose
$2$-cells are the squares of side length $1$ bounded by $\Gamma$.  The fact that
$H_2(\mathbb R^2)=0=H_1(\mathbb R^2)$ says exactly that the boundary
map from the free abelian group on the cells to $H_1(\Gamma)$ (which can be identified with $Z_1(\mathbb R^2)$) is
an isomorphism.  Moreover, the boundary map is actually a
homomorphism of $(\mathbb Z\times \mathbb Z)$-modules since
the action of $\mathbb Z\times \mathbb Z$ on $\mathbb R^2$ is
by cellular maps.  Since $\mathbb Z\times \mathbb Z$ acts
freely and transitively on the cells, it follows that $H_1(\Gamma)$ is freely generated by $c$.

Fix now $m>0$ and consider $H =\langle x^m,y\rangle\leq M_2$.
First note that the image of $H$ in $M_2/[M_2,M_2]=\mathbb Z\times \mathbb Z$
is the subgroup $m\mathbb Z\times \mathbb Z$, which must therefore
be the abelianization of $H$ as it is free of rank $2$ and $H$ is
$2$-generated.
Thus $[H,H]=[M_2,M_2]\cap H$.  Moreover, $[H,H]$ is the $m\mathbb Z\times \mathbb Z$-submodule
of $[M_2,M_2]$ generated by \[c'=\sum_{i=0}^{m-1} (c+(i,0)).\]
Indeed, the elements of $[H,H]$ are the homology classes in $H_1(\Gamma)$ of closed loops in the grid with vertex set $m\mathbb Z\times \mathbb Z$.  If we make $\mathbb R^2$ into
a cell complex by using the squares bounded by this grid,
then the same argument as above shows that $[H,H]$ is freely generated as an $(m\mathbb Z\times \mathbb Z)$-module
by the boundary of the square with vertices \[(0,0),(m,0),(0,1),(m,1).\]  But this is exactly $c'$.

Now as an $(m\mathbb Z\times \mathbb Z)$-module, $[M_2,M_2]$ is free on
$\{c+(i,0)\mid 0\leq i\leq m-1\}$.  But we can then change the basis
to the set \[\{c+(i,0)\mid 0\leq i\leq m-2\}\cup \{c'\}.\] Thus as an
$(m\mathbb Z\times \mathbb Z)$-module $[M_2,M_2]=F\oplus [H,H]$ where
$F$ is free of rank $m-1$.  We can exploit this  to reduce the
subsemimodule membership problem to the submonoid membership problem for $M_2$.

\begin{theorem}\label{freemetabelian}
There is a fixed finitely generated submonoid of the free metabelian
group of rank $2$ with undecidable membership problem.
\end{theorem}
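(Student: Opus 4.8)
The plan is to reduce the subsemimodule membership problem furnished by Theorem~\ref{mainund} directly to the submonoid membership problem for $M_2$, exploiting the decomposition $[M_2,M_2]=F\oplus[H,H]$ of $(m\mathbb Z\times\mathbb Z)$-modules established above together with the encoding technique from the proof of Proposition~\ref{undecmon}. Let $M$ be a free $(\mathbb Z\times\mathbb Z)$-module of rank $r$ whose subsemimodule membership problem is undecidable for a fixed finitely generated subsemimodule $N=\mathbb N(\mathbb Z\times\mathbb Z)\cdot\{\nu_1,\dots,\nu_k\}$. Set $m=r+1$, so that the summand $F$ is free of rank $m-1=r$. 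Identifying $m\mathbb Z\times\mathbb Z$ with $\mathbb Z\times\mathbb Z$ via $x^m\mapsto x$ and $y\mapsto y$ identifies the group rings, and since free modules of equal rank are isomorphic there is an isomorphism $\phi\colon M\to F$ of $(\mathbb Z\times\mathbb Z)$-modules carrying $N$ onto $N'=\mathbb N(m\mathbb Z\times\mathbb Z)\cdot\{f_1,\dots,f_k\}$, where $f_i=\phi(\nu_i)$; each $f_i$ is a fixed element of $[M_2,M_2]$ and hence a fixed word in $x^{\pm 1},y^{\pm 1}$.

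First I would record the two structural facts driving the reduction. Since $F\subseteq[M_2,M_2]$ and $H\cap[M_2,M_2]=[H,H]$ while $F\oplus[H,H]=[M_2,M_2]$, we get $F\cap H=F\cap[H,H]=\{1\}$. Moreover $H$ normalizes $F$: the conjugation action of $H$ on the abelian group $[M_2,M_2]$ factors through $H/[H,H]=m\mathbb Z\times\mathbb Z$ and is exactly the module action, under which the submodule $F$ is invariant. Consequently $FH$ is the internal semidirect product $F\rtimes H$, and in particular $N'$ is closed under conjugation by $H$.

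Next I would fix the submonoid and identify it. Let $\mathcal S\leq M_2$ be the submonoid generated by $x^m,x^{-m},y,y^{-1},f_1,\dots,f_k$; this is a fixed finitely generated submonoid. I claim $\mathcal S=N'H$. One inclusion is immediate: $x^{\pm m},y^{\pm 1}$ generate $H$ as a monoid and each $f_i\in\mathcal S$, so every conjugate $gf_ig^{-1}$ with $g\in H$ lies in $\mathcal S$, whence $N'\subseteq\mathcal S$ and $N'H\subseteq\mathcal S$. For the reverse inclusion one checks that $N'H$ is already a submonoid containing the generators, the only nonroutine point being closure, which follows from the $H$-invariance of $N'$ via $n_1h_1n_2h_2=n_1(h_1n_2h_1^{-1})h_1h_2\in N'H$. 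I expect this identification of $\mathcal S$ to be the main obstacle: everything hinges on $F\cap H=\{1\}$, which guarantees that multiplying the generators produces no element of $F$ outside $N'$ and that the $F$-component of an element of $\mathcal S$ is well defined.

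Finally I would assemble the reduction. Given an input $x\in\widetilde M$ of the subsemimodule problem, whose image $\rho(x)\in M$ is to be tested for membership in $N$, I would transform it effectively into a word $\bar x$ over $x^{\pm 1},y^{\pm 1}$ representing $\phi(\rho(x))\in F$: writing $\rho(x)=\sum_s a_s g_s e_{l_s}$ over a basis $e_1,\dots,e_r$ of $M$, encode each translate $g_s e_{l_s}$ as a conjugate $w_s b_{l_s} w_s^{-1}$, where $b_l$ is a fixed word representing $\phi(e_l)\in F$ and $w_s\in\{x^m,y\}^*$ represents $g_s$, and multiply, exactly as in the proof of Proposition~\ref{undecmon}; this is well defined because $[M_2,M_2]$ is abelian. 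Since $\phi(\rho(x))\in F$ and $F\cap H=\{1\}$, we have $\phi(\rho(x))\in\mathcal S=N'H$ if and only if $\phi(\rho(x))\in N'$, which by $\phi$ holds if and only if $\rho(x)\in N$. Thus $\rho(x)\in N$ if and only if $\bar x\in\mathcal S$, an effective reduction of an undecidable problem to membership in the fixed submonoid $\mathcal S$ of $M_2$, which is therefore undecidable.
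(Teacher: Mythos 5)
Your proposal is correct and follows essentially the same route as the paper: the same choice $m=r+1$, the same decomposition $[M_2,M_2]=F\oplus[H,H]$ of $(m\mathbb Z\times\mathbb Z)$-modules, the same submonoid generated by $x^{\pm m},y^{\pm 1}$ and the generators of the subsemimodule, and the same normal-form computation pushing the $H$-factors to one side. Your identification $\mathcal S=N'H$ together with $F\cap H=\{1\}$ is just a mild repackaging of the paper's claim that $S\cap[M_2,M_2]=N\oplus[H,H]$.
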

\begin{proof}
By Theorem~\ref{mainund} we can find a free
$(\mathbb Z\times \mathbb Z)$-module $M$ of rank $r$ containing a
fixed finitely generated subsemimodule $N$ with an undecidable
membership problem.  Choose $m=r+1$ and set $H=\langle x^m,y\rangle$.
We saw above that as an
$(m\mathbb Z\times \mathbb Z)$-module we can write $[M_2,M_2]=F\oplus [H,H]$  where $F$
is a free $(m\mathbb Z\times \mathbb Z)$-module of rank $r$.
Since $m\mathbb Z\times \mathbb Z\cong \mathbb Z\times \mathbb Z$,
we can of course find a fixed subsemimodule, which we abusively
denote $N$, inside of $F$ with an undecidable subsemimodule
membership problem.  Consider the submonoid $S$ of $M_2$ generated by
$N$ and $H$.  If $B$ is a finite generating set for $N$, then
$S$ is generated by $B\cup \{x^m,x^{-m},y,y^{-1}\}$ since each translate
of an element of $B$ by an element of $m\mathbb Z\times \mathbb Z$ can be obtained via a conjugation by an element of $H$.   We claim
that $S\cap [M_2,M_2] = N\oplus [H,H]$. Notice that $N\cap [H,H]=0$
since $N\leq F$ and $F\cap [H,H]=0$, so $N+[H,H]=N\oplus [H,H]$.
The inclusion from right to left is trivial.  For the other inclusion,
consider a product $g=h_0n_0\cdots h_kn_k$ with the $h_i\in H$
and the $n_i\in N$ belonging to $[M_2,M_2]$.  Then
\begin{align*}
g &= (h_0n_0h_0^{-1})(h_0h_1n_1(h_0h_1)^{-1})\cdots
(h_0\cdots h_kn_k(h_0\cdots h_k)^{-1})h_0\cdots h_k\\ &= nh_0\cdots h_k
\end{align*}
with $n\in N$.  It follows that $h_0\cdots h_k\in
[M_2,M_2]\cap H=[H,H]$ and so we obtain $g\in N\oplus [H,H]$, as required.

So suppose $x\in F$ and we want to decide whether $x\in N$.
Then since we have $S\cap [M_2,M_2]=N\oplus [H,H]\leq F\oplus [H,H]$,
it follows that $x\in N$ if and only if $x\in S$.  This completes the proof.
\qed
\end{proof}

\section{Tiling problems}
There is a classical connection between Turing machines and
tiling problems \cite{Ber66,Rob71}.  Here we consider
a variant that is most easily translated into the
subsemimodule membership problem.

\begin{figure}[t]
\begin{center}
\begin{picture}(15,15)
 \gasset{Nw=0,Nh=0,Nframe=n,AHnb=0,ELdist=.5}
    \node(tl)(0,15){} \node(tr)(15,15){}
    \node(bl)(0,0){}  \node(br)(15,0){}
    \drawedge[ELside=r](tl,tr){$c_N$}
    \drawedge(tl,bl){$c_W$}
    \drawedge(bl,br){$c_S$}
    \drawedge(br,tr){$c_E$}
  \end{picture}
\end{center}
\caption{The tile $t = (c_N,c_E,c_S,c_W)$}
\label{a tile}
\end{figure}
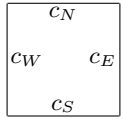

Let $C$ be a finite set of {\em colors} with a
{\em distinguished color} $c_0 \in C$.
A {\em tiling system} over $C$ is
a set $T \subseteq C^4$; its elements are called tiles.
We view a tile $t = (c_N,c_E,c_S,c_W)$
as an edge colored square, as shown in Figure~\ref{a tile}.
We will associate with the tile $t = (c_N,c_E,c_S,c_W)$
the following mapping $\links t \rechts \colon  \mathscr{E} \times C \to \mathbb{Z}$ (where $\mathscr E$ continues to denote the edge set of the Cayley graph of $\mathbb Z\times \mathbb Z$):
\begin{gather*}
\links t \rechts(\{(0,0),(1,0)\}, c_S) = -1 \cdot (1-\delta_{c_S,c_0}) \\
\links t \rechts(\{(1,0),(1,1)\}, c_E) = 1 \cdot (1-\delta_{c_E,c_0}) \\
\links t \rechts(\{(1,1),(0,1)\}, c_N) = 1 \cdot (1-\delta_{c_N,c_0}) \\
\links t \rechts(\{(0,1),(0,0)\}, c_W) = -1 \cdot (1-\delta_{c_W,c_0}) \\
\links t \rechts(e,c) = 0\ \text{in all other cases}
\end{gather*}
where as usual $\delta_{x,y}=1$ when $x=y$ and $\delta_{x,y}=0$ if $x\neq y$.
Thus, we color, for instance, the north edge $\{(0,1),(1,1)\}$
of the cell $\{ (x,y) \mid 0 \leq x,y\leq 1\}$ with the color $c_N$,
in case $c_N  \neq c_0$.
The sign of the value $\links t \rechts(e, c)$ indicates that
the north and east (south and west) edge receive
a positive (negative) orientation. This will be used below, where we
add translates of the maps $\links t \rechts$.  Edges that are colored by $c_0$ receive the value $0$.

Let $f \colon  \mathscr{E} \times C \to \mathbb{Z}$.
We say that $f$ has \emph{finite support} if the set
$f^{-1}(\mathbb{Z}\setminus\{0\})$ is finite.
For $(a,b) \in \mathbb{Z} \times \mathbb{Z}$ we define
the translate $\tau_{a,b}f \colon  \mathscr{E} \times C \to \mathbb{Z}$
as the mapping with
\[\tau_{a,b}f(e,c) = f(e-(a,b),c) \text{ for all } e \in  \mathscr{E}
\text{ and } c \in C.\]
For two mappings $f_1, f_2 \colon   \mathscr{E} \times C
\to \mathbb{Z}$ we define the sum
$f_1 + f_2 \colon  \mathscr{E} \times \Gamma \to \mathbb{Z}$
by
$(f_1+f_2)(e,c) = f_1(e,c)+f_2(e,c)$
for all $e \in \mathscr{E}$ and $c \in C$.
We denote by $0$ the constant
mapping taking the value $0$ everywhere on
$\mathscr{E} \times \Gamma$.
The set of all mappings from $\mathscr{E} \times C$
to $\mathbb{Z}$ forms an abelian group under addition.
The set of all mappings with finite support is a subgroup
of this group.

A {\em tiling sum} over $T$ is a sum of the form
\begin{equation}\label{tilingsum}
f = \sum_{i=1}^n \tau_{x_i,y_i}\links t_i \rechts,
\end{equation}
where $x_i,y_i \in \mathbb{Z}$ and $t_i \in T$ for all $1 \leq i \leq n$.
The evaluation of such a sum yields a
mapping $f \colon  \mathscr{E} \times C \to \mathbb{Z}$.
Note that one may have $(x_i,y_i)=(x_j,y_j)$ for $i \neq j$.
Intuitively one can think of a tiling sum as putting
tiles on certain positions of the grid (one may put
several tiles on the same position or even put the same
tile several times on the same position). When evaluating
the tiling sum, we cancel matching colors on edges, which happens
if, e.g., the color on the north side of a tile matches the
color on the south side of the tile immediately above it.
The distinguished color $c_0$ is not involved in this cancellation
process.  Let us agree to say that the tile $t_i$ is \emph{placed}
in position $(x_i,y_i)$ in the tiling sum \eqref{tilingsum}.
Of course, the same tile may be placed in multiple positions or even multiple times in the same position.

The {\em zero tiling sum problem} for a given tiling system $T$ over
$C$ is the following computational problem:

\medskip

\noindent
INPUT: A mapping
$f_0 \colon  \mathscr{E} \times \Gamma \to \mathbb{Z}$ with finite support.

\medskip

\noindent
QUESTION: Is there a tiling sum $f$ with $f_0+f=0$?

\begin{theorem} \label{thm tiling}
The zero tiling sum problem is undecidable.
\end{theorem}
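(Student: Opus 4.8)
The plan is to reduce the halting problem for a fixed Turing machine $\mathcal M$ whose halting problem on inputs is undecidable (e.g.\ a universal machine) to the zero tiling sum problem, encoding the input word $w$ in the finite-support map $f_0$ while keeping the tiling system $T$ fixed. The guiding observation is that the equation $f+f_0=0$ forces, edge by edge, a system of matching conditions. Writing $N_c(p,q)$ (resp.\ $S_c(p,q)$, $E_c(p,q)$, $W_c(p,q)$) for the number of tiles placed at $(p,q)$ whose north (resp.\ south, east, west) color is $c$, one computes for a horizontal edge $h_{x,y}=\{(x,y),(x+1,y)\}$ and a color $c\neq c_0$ that $f(h_{x,y},c)=N_c(x,y-1)-S_c(x,y)$, and for a vertical edge $v_{x,y}=\{(x,y),(x,y+1)\}$ that $f(v_{x,y},c)=E_c(x-1,y)-W_c(x,y)$. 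Hence, away from the support of $f_0$, the multiset of north colors of the tiles at $(x,y-1)$ must equal the multiset of south colors of the tiles at $(x,y)$, and likewise horizontally. Reading the second coordinate as time, a solution is exactly a space-time diagram whose successive rows are related by the local rule carried by $T$.

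I would design $T$ so that its non-$c_0$ colors encode tape symbols together with head/state information, the south/north colors transmitting the evolving tape content in time and the west/east colors transmitting the lateral head signals; each transition of $\mathcal M$ contributes one tile, a blank tile with all four edges colored $c_0$ serves as the free (uncounted) background, and special tiles turn a halting configuration back into the blank color. The map $f_0$ is supported on the bottom edges $h_{x,0}$, recording with coefficient $+1$ the colors of the initial configuration of $\mathcal M$ on input $w$. Since $c_0$ edges contribute $0$, the condition $f+f_0=0$ says precisely: the south colors of the bottom row reproduce the initial configuration, every interior edge cancels, and every remaining boundary edge (top, sides, and the unused part of the bottom) carries $c_0$. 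Finiteness of the support of $f_0$, hence of $f$, forces the whole diagram to be finite, so that a solution exists precisely when $\mathcal M$ halts: a halting run yields a finite diagram that can be capped off to blank on top and sides, giving a tiling sum placing each tile once, while a solution yields a finite, boundary-blank, locally valid diagram seeded by the initial configuration, which can only terminate by reaching a halting state.

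The hard part will be controlling the nonnegative multiplicities: tiles may be stacked at one position and placed in arbitrarily many positions, and only addition is available, so I must rule out spurious solutions that cancel without encoding a genuine computation. I would handle this by a rigidity argument propagating from the pinned boundary inward. Since $f_0$ fixes the bottom row to carry each initial color with multiplicity exactly $1$, the identities $S_c(x,y)=N_c(x,y-1)$ and $W_c(x,y)=E_c(x-1,y)$, together with the determinism of $\mathcal M$, inductively force each successive row to be the genuine next configuration, again with multiplicity $1$; the requirement that the side boundaries be $c_0$ anchors the horizontal induction at a blank left margin. To exclude \emph{free-floating} clusters of tiles sitting in the blank region and internally cancelling, I would build a strictly monotone component (a time counter, or a one-directional signal) into the colors, so that any non-blank finite configuration must originate at an edge pinned by $f_0$ rather than forming a cycle; this makes every non-$c_0$ tile ultimately forced by the seed and guarantees that no solution exists unless $\mathcal M$ genuinely halts. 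Establishing this rigidity cleanly, rather than the routine bookkeeping of the tile set, is where the real work lies.
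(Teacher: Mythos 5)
Your plan matches the paper's proof in all essentials: a fixed deterministic Turing machine is simulated row by row, the input is seeded through $f_0$ along the bottom of the computation region, and the rigidity against stacked tiles and free-floating mutually-cancelling clusters is obtained by exactly the mechanism you propose, namely a one-directional anchoring property (in the paper: every tile has a non-$c_0$ color on its south or west edge, so a componentwise-minimal tile of any cluster not pinned by $f_0$ has an unmatched edge, and determinism plus the multiplicity-one seed then forces each row to be the unique next configuration with multiplicity one). The only point you gloss over is how the machine gets more tape than the input occupies --- the paper adds a rightward signal to $f_0$ that forces a bottom row of padding tiles out to an existentially chosen width $m$ --- but this is resolved by the same anchoring device, so your approach is essentially the paper's.
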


\begin{proof}
We start with a fixed deterministic Turing machine $M = (Q, \Gamma, \Sigma, \delta, q_0, q_f)$
with an undecidable acceptance problem.
Here, $Q$ is the set of states, $\Gamma$ is the tape alphabet, $\Sigma
\subseteq \Gamma$ is the input alphabet, $q_0 \in Q$ is the initial state,
$q_f \in Q$ is the unique accepting state, and $\delta \colon  Q \times \Gamma \to Q
\times \Gamma \times \{L,R\}$ is the transition mapping ($L$ (resp. $R$) means
that the head moves left (resp. right)).
The blank symbol is $\boxempty \in \Gamma
\setminus \Sigma$. We can make the following assumptions
on the machine $M$:
\begin{itemize}
\item The tape of $M$ is bounded to the left; that is, the machine never moves to the left of the first cell.
\item $M$ terminates if and only if it reaches the accepting state
$q_f$. In particular, an input $w$ is accepted if and only if $M$
terminates on $w$.
\item If $M$ reaches state $q_f$ then the whole tape is blank
and the head of the machine is scanning the left most cell.
\end{itemize}
We take the following fixed set of colors:
\[C = Q \cup \Gamma \cup (Q\times \Gamma)  \cup \{\shortrightarrow, \shortuparrow,   \shortleftarrow,
\shortdownarrow, \begin{turn}{45}\text{$\shortleftarrow$}\end{turn},
\triangleleft, \triangleright, c_0\}.\]
Here, $c_0$ is the distinguished color. In the following pictures
the color $c_0$ will be indicated in a tile by a dotted side.  Also the pair $(q,a)\in Q\times \Gamma$ will be written $qa$.

The set of tiles $T$ consists of the following tiles, which are inspired by
the tiles of the tiling system from \cite[Appendix A]{BoGrGu01}:

\begin{itemize}
\item
Alphabet tiles (for all $a \in \Gamma$):

\begin{picture}(60,21)(0,-3)
\put(10,0){
\begin{picture}(15,15)
 \gasset{Nw=0,Nh=0,Nframe=n,AHnb=0,ELdist=.5}
    \node(tl)(0,15){} \node(tr)(15,15){}
    \node(bl)(0,0){}  \node(br)(15,0){}
    \drawedge[ELside=r](tl,tr){$a$}
    \drawedge(tl,bl){$\triangleright$}
    \drawedge(bl,br){$a$}
    \drawedge(br,tr){$\triangleright$}
  \end{picture}
}
\put(30,0){
\begin{picture}(15,15)
 \gasset{Nw=0,Nh=0,Nframe=n,AHnb=0,ELdist=.5}
    \node(tl)(0,15){} \node(tr)(15,15){}
    \node(bl)(0,0){}  \node(br)(15,0){}
    \drawedge[ELside=r](tl,tr){$a$}
    \drawedge(tl,bl){$\triangleleft$}
    \drawedge(bl,br){$a$}
    \drawedge(br,tr){$\triangleleft$}
  \end{picture}
}
\end{picture}
\item
Merging tiles (for all $a \in \Gamma$ and all $p \in Q$):

\begin{picture}(60,21)(0,-3)
\put(10,0){
\begin{picture}(15,15)
 \gasset{Nw=0,Nh=0,Nframe=n,AHnb=0,ELdist=.5}
    \node(tl)(0,15){} \node(tr)(15,15){}
    \node(bl)(0,0){}  \node(br)(15,0){}
    \drawedge[ELside=r](tl,tr){$pa$}
    \drawedge(tl,bl){$p$}
    \drawedge(bl,br){$a$}
    \drawedge(br,tr){$\triangleright$}
  \end{picture}
}
\put(30,0){
\begin{picture}(15,15)
 \gasset{Nw=0,Nh=0,Nframe=n,AHnb=0,ELdist=.5}
    \node(tl)(0,15){} \node(tr)(15,15){}
    \node(bl)(0,0){}  \node(br)(15,0){}
    \drawedge[ELside=r](tl,tr){$pa$}
    \drawedge(tl,bl){$\triangleleft$}
    \drawedge(bl,br){$a$}
    \drawedge(br,tr){$p$}
  \end{picture}
}
\end{picture}
\item
Action tiles for moves of the machine $M$:

\begin{picture}(60,29)(0,-3)
\put(10,0){
\begin{picture}(15,20)
 \gasset{Nw=0,Nh=0,Nframe=n,AHnb=0,ELdist=.5}
    \node(tl)(0,15){} \node(tr)(15,15){}
    \node(bl)(0,0){}  \node(br)(15,0){}
    \drawedge[ELside=r](tl,tr){$b$}
    \drawedge(tl,bl){$p$}
    \drawedge(bl,br){$qa$}
    \drawedge(br,tr){$\triangleright$}
    \put(-5,20){if $\delta(q,a) = (p,b,L)$:}
  \end{picture}
}
\put(50,0){
\begin{picture}(15,20)
 \gasset{Nw=0,Nh=0,Nframe=n,AHnb=0,ELdist=.5}
    \node(tl)(0,15){} \node(tr)(15,15){}
    \node(bl)(0,0){}  \node(br)(15,0){}
    \drawedge[ELside=r](tl,tr){$b$}
    \drawedge(tl,bl){$\triangleleft$}
    \drawedge(bl,br){$qa$}
    \drawedge(br,tr){$p$}
    \put(-5,20){if $\delta(q,a) = (p,b,R)$:}
  \end{picture}
}
\end{picture}
\item
Boundary tiles (the labels $b_i$ are just names that we give to these tiles):

\begin{picture}(120,41)(0,-3)
\put(10,20){
\begin{picture}(15,15)
 \gasset{Nw=0,Nh=0,Nframe=n,AHnb=0,ELdist=.5}
    \node(tl)(0,15){} \node(tr)(15,15){}
    \node(bl)(0,0){}  \node(br)(15,0){}
    \drawedge[ELside=r](tl,tr){$\boxempty$}
    \drawedge(tl,bl){$\shortrightarrow$}
    \drawedge[dash={0.2 0.5}0](bl,br){}
    \drawedge(br,tr){$\shortrightarrow$}
    \node(t0)(7.5,7.5){$b_0$}
  \end{picture}

}
\put(30,20){
\begin{picture}(15,15)
 \gasset{Nw=0,Nh=0,Nframe=n,AHnb=0,ELdist=.5}
    \node(tl)(0,15){} \node(tr)(15,15){}
    \node(bl)(0,0){}  \node(br)(15,0){}
    \drawedge[ELside=r](tl,tr){$\shortuparrow$}
    \drawedge(tl,bl){$\shortrightarrow$}
    \drawedge[dash={0.2 0.5}0](bl,br){}
    \drawedge[dash={0.2 0.5}0](br,tr){}
     \node(t0)(7.5,7.5){$b_1$}
  \end{picture}
}
\put(50,20){
\begin{picture}(15,15)
 \gasset{Nw=0,Nh=0,Nframe=n,AHnb=0,ELdist=.5}
    \node(tl)(0,15){} \node(tr)(15,15){}
    \node(bl)(0,0){}  \node(br)(15,0){}
    \drawedge[ELside=r](tl,tr){$\shortuparrow$}
    \drawedge(tl,bl){$\triangleright$}
    \drawedge(bl,br){$\shortuparrow$}
    \drawedge[dash={0.2 0.5}0](br,tr){}
     \node(t0)(7.5,7.5){$b_2$}
  \end{picture}
}
\put(70,20){
\begin{picture}(15,15)
 \gasset{Nw=0,Nh=0,Nframe=n,AHnb=0,ELdist=.5}
    \node(tl)(0,15){} \node(tr)(15,15){}
    \node(bl)(0,0){}  \node(br)(15,0){}
    \drawedge[dash={0.2 0.5}0](tl,tr){}
    \drawedge(tl,bl){$\shortleftarrow$}
    \drawedge(bl,br){$\shortuparrow$}
    \drawedge[dash={0.2 0.5}0](br,tr){}
    \node(t0)(7.5,7.5){$b_3$}
  \end{picture}
}
\put(10,0){
\begin{picture}(15,15)
 \gasset{Nw=0,Nh=0,Nframe=n,AHnb=0,ELdist=.5}
    \node(tl)(0,15){} \node(tr)(15,15){}
    \node(bl)(0,0){}  \node(br)(15,0){}
    \drawedge[dash={0.2 0.5}0](tl,tr){}
    \drawedge(tl,bl){$\shortleftarrow$}
    \drawedge(bl,br){$\boxempty$}
    \drawedge(br,tr){$\shortleftarrow$}
    \node(t0)(7.5,7.5){$b_4$}
  \end{picture}
}
\put(30,0){
\begin{picture}(15,15)
 \gasset{Nw=0,Nh=0,Nframe=n,AHnb=0,ELdist=.5}
    \node(tl)(0,15){} \node(tr)(15,15){}
    \node(bl)(0,0){}  \node(br)(15,0){}
    \drawedge[dash={0.2 0.5}0](tl,tr){}
    \drawedge(tl,bl){\begin{turn}{45}\text{$\shortleftarrow$}\end{turn}}
    \drawedge(bl,br){${q_f}\boxempty$}
    \drawedge(br,tr){$\shortleftarrow$}
    \node(t0)(7.5,7.5){$b_5$}
  \end{picture}
}
\put(50,0){
\begin{picture}(15,15)
 \gasset{Nw=0,Nh=0,Nframe=n,AHnb=0,ELdist=.5}
    \node(tl)(0,15){} \node(tr)(15,15){}
    \node(bl)(0,0){}  \node(br)(15,0){}
    \drawedge[dash={0.2 0.5}0](tl,tr){}
    \drawedge[dash={0.2 0.5}0](tl,bl){}
    \drawedge(bl,br){$\shortdownarrow$}
    \drawedge(br,tr){\begin{turn}{45}\text{$\shortleftarrow$}\end{turn}}
    \node(t0)(7.5,7.5){$b_6$}
  \end{picture}
}
\put(70,0){
\begin{picture}(15,15)
 \gasset{Nw=0,Nh=0,Nframe=n,AHnb=0,ELdist=.5}
    \node(tl)(0,15){} \node(tr)(15,15){}
    \node(bl)(0,0){}  \node(br)(15,0){}
    \drawedge[ELside=r](tl,tr){$\shortdownarrow$}
    \drawedge[dash={0.2 0.5}0](tl,bl){}
    \drawedge(bl,br){$\shortdownarrow$}
    \drawedge(br,tr){$\triangleleft$}
    \node(t0)(7.5,7.5){$b_7$}
  \end{picture}
}
\end{picture}
\end{itemize}
Now, let $w = w_1 w_2 \cdots w_n$ be an input for the machine $M$
with the $w_i  \in \Sigma$. We associate with $w$ the following
mapping $f_w \colon  \mathscr{E} \times C \to \mathbb{Z}$:
\begin{eqnarray*}
f_w( \{ (0,1),(1,1) \}, \shortdownarrow ) & = & 1 \\
f_w( \{ (1,1),(2,1)\}, q_0 w_1) & = & 1 \\
f_w( \{ (i,1),(i+1,1)\}, w_i) & = & 1 \text{ for }  2 \leq i \leq n \\
f_w( \{ (n+1,1),(n+1,0)\}, \shortrightarrow ) & = & 1.
\end{eqnarray*}
All other values of $f_w$ are $0$, hence $f_w$ has finite support.
As a diagram, the mapping $f_w$ looks as follows:
\begin{center}
\setlength{\unitlength}{.7mm}
\begin{picture}(90,21)(0,-3)
 \gasset{Nw=1.5,Nh=1.5,Nframe=n,Nfill=y,AHnb=0,ELdist=.7,linewidth=.6}
    \node[ExtNL=y,NLangle=180](0)(0,15){$(0,1)$}
    \node(1)(15,15){}
    \node(2)(30,15){}
    \node(3)(45,15){}
    \node(4)(60,15){}
    \node(5)(75,15){}
    \node(6)(90,15){}
    \node(a)(90,0){}
    \drawedge[ELside=r](0,1){$\shortdownarrow$}
    \drawedge[ELside=r](1,2){$q_0w_1$}
    \drawedge[ELside=r](2,3){$w_2$}
    \drawedge[ELside=r](3,4){$w_3$}
    \drawedge[dash={1.5}{}](4,5){}
    \drawedge[ELside=r](5,6){$w_n$}
    \drawedge(a,6){$\shortrightarrow$}
  \end{picture}
\end{center}
We will show that $M$ accepts the input $w$ if and only if
there is a tiling sum $f$ with
$f_w + f = 0$.

First assume that there is
such a tiling sum $f$ and let
\begin{equation}\label{tiling sum f}
f = \sum_{i=1}^N \tau_{x_i,y_i}\links t_i \rechts .
\end{equation}


\noindent
{\em Claim 1.} For all $1 \leq i \leq N$, we have both
$x_i,y_i \geq 0$ and either $y_i \geq 1$, or $x_i \geq n+1$ in (\ref{tiling sum f}),
i.e., all tiles are placed into the shaded area in Figure~\ref{shaded area}.

\medskip

\noindent
Let $\preceq$ be the componentwise order on $\mathbb{Z} \times \mathbb{Z}$,
i.e., $(x',y') \preceq (x,y)$ if and only if $x' \leq x$ and $y'\leq y$.
In order to deduce a contradiction, assume that there exists a tile of $f$
placed outside the shaded area and suppose that $i$ is chosen so that
$(x_i,y_i)$ is $\preceq$-minimal with $\tau_{(x_i,y_i)}\links t_i\rechts$ outside of the shaded area.
\begin{figure}[t]
\begin{center}
\setlength{\unitlength}{.5mm}
\begin{picture}(130,90)(0,-3)
    \gasset{Nw=2,Nh=2,Nframe=n,Nfill=y,AHnb=0,ELdist=.9,linewidth=.5}
   \drawpolygon[linewidth=0,fillgray=0.75](0,15)(90,15)(90,0)(130,0)(130,80)(0,80)
    \gasset{linewidth=.7}
    \node[ExtNL=y,NLangle=180](0)(0,15){$(0,1)$}
    \node(1)(15,15){}
    \node(2)(30,15){}
    \node(3)(45,15){}
    \node(4)(60,15){}
    \node(5)(75,15){}
   \node[ExtNL=y,NLangle=0](6)(90,15){$(n+1,1)$}
   \node(a)(90,0){}
    \drawedge[ELside=r](0,1){$\shortdownarrow$}
    \drawedge[ELside=r](1,2){$q_0w_1$}
    \drawedge[ELside=r](2,3){$w_2$}
    \drawedge[ELside=r](3,4){$w_3$}
    \drawedge[dash={1.5}{}](4,5){}
    \drawedge[ELside=r](5,6){$w_n$}
    \drawedge(a,6){$\shortrightarrow$}
    \put(131,40){$\ldots$}
    \put(65,81){$\vdots$}
  \end{picture}
\end{center}
\caption{\label{shaded area}}
\end{figure}
Note that for every tile in $T$, the south or the west edge is
colored differently from $c_0$.
Hence, the south or the west color of tile $t_i$ is different
from $c_0$. In order to match this up, there must exist a tile placed
to the south or to the west of $t_i$, that is, there must be $1 \leq j
\leq N$  such that either $x_j = x_i$ and $y_j = y_i-1$, or $x_j = x_i-1$ and $y_j = y_i$.
This contradicts the choice of $i$.

Recall that $b_0$ and $b_1$ are two boundary tiles.

\medskip

\noindent
{\em Claim 2.}  There exists $m \geq n+1$ such that
the tiling sum $f$ in (\ref{tiling sum f}) can be written as
\begin{equation} \label{g1}
f =  \sum_{i=n+1}^{m-1} \tau_{i,0}\links b_0 \rechts +
\tau_{m,0}\links b_1 \rechts + g_1,
\end{equation}
where $g_1$ is a tiling sum, which does not contain a
summand of the form $\tau_{x,0}\links t \rechts$ for some
$x \in \mathbb{Z}$ and some tile $t$.

\medskip

\noindent
Since $f_w( \{ (n+1,1),(n+1,0)\}, \shortrightarrow ) = 1$,
$f$ must contain a summand of the form $\tau_{x,0} \links t \rechts$ with $x \geq n+1$.
Let $m$ be the maximal $x$ with this property.
The tile $t$ must be $b_1$, because every other tile has a color different
from $c_0$ on its east side or on its south side. Then, $f$ would contain
a summand of the form $\tau_{m+1,0} \links t' \rechts$ (which contradicts the choice of
$m$) or $\tau_{m,-1} \links t' \rechts$ (which contradicts Claim 1). Hence,
$f$ contains the summand $\tau_{m,0} \links b_1 \rechts$.
Now, by induction on $i$ we can easily show that $f$ must contain
all summands $\tau_{i,0} \links b_0 \rechts$ for $n+1 \leq i \leq m-1$.
For this, note that $b_0$ is the only tile with color $\shortrightarrow$
on its east side. Hence, we can write $f$ as
$f = \sum_{i=n+1}^{m-1} \tau_{i,0} \links b_0 \rechts + \tau_{m,0}\links b_1 \rechts + g_1$
for some tiling sum $g_1$. The diagram of the evaluation
of the sum
\[f_1 = f_w + \sum_{i=n+1}^{m-1} \tau_{i,0}\links b_0 \rechts + \tau_{m,0}\links b_1 \rechts \]
is shown in Figure~\ref{fig initial config}.
Note that we have
\[f_w + f = f_1 + g_1 = 0 .\]
Now, assume that $g_1$ contains a summand of the form
$\tau_{x,0}\links t \rechts$
for  some $x$ and some tile $t$. Choose $x$ minimal with
this property. Since the west or the south side
of tile $t$ has a color different from $c_0$, the sum
$g_1$ must contain a summand of the form $\tau_{x-1,0}\links t' \rechts$
(which contradicts the choice of $x$) or of the form
$\tau_{x,-1}\links t' \rechts$ (which contradicts Claim~1).
This proves Claim~2.
\begin{figure}[t]
\begin{center}
\setlength{\unitlength}{.6mm}
\begin{picture}(160,10)(0,10)
 \gasset{Nw=1.5,Nh=1.5,Nframe=n,Nfill=y,AHnb=0,ELdist=.7,linewidth=.6}
    \node[ExtNL=y,NLangle=180](0)(0,15){$(0,1)$}
    \node(1)(15,15){}
    \node(2)(30,15){}
    \node(3)(45,15){}
    \node(4)(60,15){}
    \node(5)(75,15){}
    \node(6)(90,15){}
    \node(7)(105,15){}
    \node(8)(120,15){}
   \node(9)(135,15){}
   \node[ExtNL=y,NLangle=0](10)(150,15){$(m+1,1)$}
   \drawedge[ELside=r](0,1){$\shortdownarrow$}
    \drawedge[ELside=r](1,2){$q_0w_1$}
    \drawedge[ELside=r](2,3){$w_2$}
    \drawedge[ELside=r](3,4){$w_3$}
    \drawedge[dash={1.5}{}](4,5){}
    \drawedge[ELside=r](5,6){$w_n$}
    \drawedge[ELside=r](6,7){$\boxempty$}
    \drawedge[dash={1.5}{}](7,8){}
    \drawedge[ELside=r](8,9){$\boxempty$}
    \drawedge[ELside=r](9,10){$\shortuparrow$}
  \end{picture}
\end{center}
\caption{\label{fig initial config} The evaluation of the sum
$\displaystyle{f_1 = f_w + \sum_{i=n+1}^{m-1} \tau_{i,0}\links b_0 \rechts +\tau_{m,0}\links b_1 \rechts}$}
\end{figure}

\medskip

\noindent
By Claims~1 and 2, we know that all summands in $g_1$ are of the form
$\tau_{x,y}\links t \rechts$ with $x \geq 0$ and $y \geq 1$.
Moreover, $g_1$ added to $f_1$ in Figure~\ref{fig initial config} gives $0$.

\medskip

\medskip

\noindent
{\em Claim 3.}
The tiling sum $g_1$ does not contain a summand of the form
$\tau_{x,y}\links t \rechts$ ($x \geq 0,y \geq 1$) with $t \in \{b_0,b_1\}$.

\medskip

\noindent
Assume that $g_1$ contains the summand $\tau_{x,y}\links t \rechts$
($x \geq 0,y \geq 1$) with $t \in \{b_0,b_1\}$
and assume that $x$ is minimal with this property.
Since the west edge of $t$ is labeled with $\shortrightarrow$,
$g_1$ has to contain the summand $\tau_{x-1,y}\links b_0 \rechts$, which
is again a contradiction.

\medskip

\noindent
{\em Claim 4.}
The tiling sum $g_1$ does not contain a summand of the form
$\tau_{x,y}\links t \rechts$ with $x \geq m+1$.

\medskip

\noindent
Assume that $g_1$ contains the summand $\tau_{x,y} \links t \rechts$ with $x \geq m+1$
and assume that $y$ is minimal with this property.
Since $t \not\in \{b_0,b_1\}$ by Claim 3, the south side of $t$
is labeled with a color different from $c_0$. In order to match
this up, $g_1$ has to contain also a summand of the from
$\tau_{x,y-1} \links t \rechts$, which contradicts the minimality of $y$.

\medskip

\noindent
{\em Claim 5.}
For every position $(x,y)$
with $0 \leq x \leq m, y \geq 1$, the tiling sum $f$ does not
contain a summand of the form $\tau_{x,y}\links t \rechts + \tau_{x,y}\links t' \rechts$
(possibly with $t=t'$),  i.e., no position $(x,y)$ with
$0 \leq x \leq m, y \geq 1$ receives two tiles.

\medskip

\noindent
Assume that $g_1 = \tau_{x,y}\links t \rechts + \tau_{x,y}\links t' \rechts + g'_1$ ($0 \leq x \leq m$, $y\geq 1$),
i.e., position $(x,y)$ receives at least two tiles.
We can assume that $y$ is minimal with this property.
Since $\{t,t'\} \cap \{b_0,b_1\} = \emptyset$ by Claim 3, the south side
of $t$ ($t'$, resp.) is labeled with a color $c \neq c_0$
($c' \neq c_0$, resp.). Hence, the edge $\{ (x,y), (x+1,y) \}$ receives the
colors $c$ and $c'$ (we may have $c=c'$, i.e.,
$\{ (x,y), (x+1,y) \}$ receives the color $c$ twice).
If $y \geq 2$, then we have to match this up by putting
at least two tiles on position $(x,y-1)$. Since this
contradicts the choice of $y$, we may assume that $y=1$.
Recall that $0 \leq x \leq m$.
Let $u_0$ and $u_1$ be two tiles that are put onto position
$(x,1)$ (we may have $u_0 = u_1$).
Since the south edges of $u_0$ and $u_1$ are labeled
with colors different from $c_0$ and since
the edge $\{(x,1), (x+1,1) \}$ in Figure~\ref{fig initial config}
is labeled with a single color exactly once, $g_1$ has
to contain a summand of the form $\tau_{x,0} f_u$ for
some tile $u$. This contradicts Claim~2 and proves
Claim~5.

\medskip

\noindent
Now that we have established Claims~1--5, we  are essentially faced with a classical
tiling problem. We have to find a tiling (in the classical sense, where
each grid point gets at most one tile), such that the south side
of the final tiling is labeled with the line in
Figure~\ref{fig initial config} and all other boundary edges are labeled
with the distinguished color $c_0$. Note that the line in
Figure~\ref{fig initial config} is labeled with the word
$C_1 = \,  \shortdownarrow \!\!  q_0w_1 \, w_2 \cdots w_n \boxempty^{m-n-1}\!\!
\shortuparrow$, which represents the initial configuration
for the input $w = w_1 w_2 \cdots w_n$.
Recall that we want to show that $M$ finally accepts the input $w$,
which is equivalent to the fact that $M$ finally terminates on
input $w$.
In order to deduce a contradiction, assume that
$M$ does not terminate on input $w$. Let $C_i$ ($i \geq 1$) be the unique
configuration that is reached from $C_1$ after $i-1$ steps.
We can view every $C_i$ as a word over the alphabet
$\Gamma \cup (Q\times \Gamma) \cup \{
\shortuparrow, \shortdownarrow\}$ starting (ending) with
$\shortdownarrow$ ($\shortuparrow$).  Here $\shortdownarrow$ ($\shortuparrow$) marks the beginning (end) of the tape. The fact that the machine is in state $q$ with the tape head over the symbol $a$ is indicated by an occurrence of $(q,a)$ (which we abbreviate to $qa$) in $C_i$.
By padding words with blanks, we can assume for every
$i \geq 1$: either $|C_i|=m+1$ (this is the case for
$C_1$) or $|C_i|>m+1$ and $C_i$ does not end with $\boxempty\!\shortuparrow$
(which means that $C_i$ cannot be represented by a shorter
configuration word).

We prove by induction on $i$ that, for every $i \geq 1$, we have $|C_i|=m+1$ and
the tiling sum $g_1$ in (\ref{g1}) can be written
as $g_1 = h_i + g_i$ where all summands in $h_i$ ($g_i$, resp.)
are of the form $\tau_{x,y} \links t \rechts$ with $0 \leq x \leq m$ and $1 \leq y \leq  i-1$
($0 \leq x \leq m$ and $y \geq  i$, resp.) and
the diagram of the evaluation of
$f_1 + h_i$ is as shown in Figure~\ref{fig C_i},
\begin{figure}[t]
\begin{center}
\setlength{\unitlength}{.7mm}
\begin{picture}(145,10)(0,10)
 \gasset{Nw=1.5,Nh=1.5,Nframe=n,Nfill=y,AHnb=0,ELdist=.7,linewidth=.6}
    \node[ExtNL=y,NLangle=180](0)(0,15){$(0,i)$}
    \node[ExtNL=y,NLangle=0](9)(135,15){$(m+1,i)$}
    \node(1)(15,15){}
    \node(2)(30,15){}
    \node(3)(45,15){}
    \node(4)(60,15){}
    \node(5)(75,15){}
    \node(6)(90,15){}
    \node(7)(105,15){}
    \node(8)(120,15){}
    \drawedge[ELside=r](0,1){$\shortdownarrow$}
    \drawedge[ELside=r](1,2){$u_1$}
    \drawedge[dash={1.5}{}](2,3){}
    \drawedge[ELside=r](3,4){$u_{j-1}$}
    \drawedge[ELside=r](4,5){$q u_j$}
    \drawedge[ELside=r](5,6){$u_{j+1}$}
    \drawedge[dash={1.5}{}](6,7){}
    \drawedge[ELside=r](7,8){$u_{m-1}$}
    \drawedge[ELside=r](8,9){$\shortuparrow$}
  \end{picture}
\end{center}
\caption{\label{fig C_i} The evaluation of the sum $f_1+h_i$}
\end{figure}
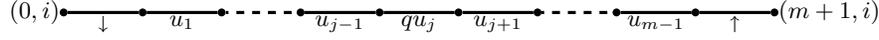
where $C_i = \, \shortdownarrow \!\! u_1 u_2 \cdots u_{j-1} qu_j u_{j+1} \cdots
u_{m-1} \!\!  \shortuparrow$ with the $u_i\in \Gamma$.  This will contradict the fact that $g_1$ is a finite
tiling sum.  It will therefore follow that $w$ is accepted by the machine.

For $i=1$ we  take $h_1=0$. Assume that the above statement is already shown
for $i \geq 1$. We have $0 = f_1+g_1 = (f_1+ h_i)+g_i$, where the evaluation of the sum
$f_1+h_i$ represents the configuration $C_i$ as shown in Figure~\ref{fig C_i}.
All summands in $g_i$ are of the form $\tau_{x,y} \links t \rechts$ with $0 \leq x \leq m$ and $y \geq  i$.

Note that it is not possible
that $j=1$ and $\delta(q,u_j) \in Q \times \Gamma \times \{L\}$
(the machine $M$ is programmed in such way that it does not
cross the left end of the tape). Moreover, $q$ is not the final state $q_f$ since we are assuming that $w$ is not accepted.
We distinguish two cases. Suppose first
$j=m-1$ and $\delta(q,u_j) \in Q \times \Gamma \times \{R\}$.  Then  the diagram of the evaluation of
$f_1 + h_i$ in fact has the following shape:
\begin{center}
\setlength{\unitlength}{.8mm}
\begin{picture}(115,10)(0,10)
 \gasset{Nw=1.5,Nh=1.5,Nframe=n,Nfill=y,AHnb=0,ELdist=.7,linewidth=.6}
    \node[ExtNL=y,NLangle=180](0)(0,15){$(0,i)$}
    \node[ExtNL=y,NLangle=0](7)(105,15){$(m+1,i)$}
    \node(1)(15,15){}
    \node(2)(30,15){}
    \node(3)(45,15){}
    \node(4)(60,15){}
    \node(5)(75,15){}
    \node(6)(90,15){}
    \drawedge[ELside=r](0,1){$\shortdownarrow$}
    \drawedge[ELside=r](1,2){$u_1$}
    \drawedge[ELside=r](2,3){$u_2$}
     \drawedge[dash={1.5}{}](3,4){}
    \drawedge[ELside=r](4,5){$u_{m-2}$}
    \drawedge[ELside=r](5,6){$q u_{m-1}$}
    \drawedge[ELside=r](6,7){$\shortuparrow$}
  \end{picture}
\end{center}
The only possible tiles that can be placed in position $(m-1,i)$ are action
tiles with south side colored $qu_{m-1}$.  Since the machine is
deterministic and $\delta(q,u_{m-1}) \in Q \times \Gamma \times \{R\}$, the unique
such action tile has the shape
\begin{center}
\setlength{\unitlength}{1mm}
\begin{picture}(15,15)
 \gasset{Nw=0,Nh=0,Nframe=n,AHnb=0,ELdist=.5}
    \node(tl)(0,15){} \node(tr)(15,15){}
    \node(bl)(0,0){}  \node(br)(15,0){}
    \drawedge[ELside=r](tl,tr){$b$}
    \drawedge(tl,bl){$\triangleleft$}
    \drawedge(bl,br){$qu_{m-1}$}
    \drawedge(br,tr){$p$}
  \end{picture}
  \end{center}
and so this tile must be placed in position $(m-1,i)$.
But since there is no tile with west side $p \in Q$ and south side
$\shortuparrow$, we obtain a contradiction thanks to Claim 5.

Next suppose that either $j<m-1$, or $j=m-1$ and $\delta(q,u_j) \in Q \times \Gamma \times \{L\}$.  Then certainly $|C_{i+1}|=m+1$.
Now, we can match up the edges in Figure~\ref{fig C_i}
in exactly one way: In position $(j,i)$ we have to put the unique action tile
with south side $qu_j$ (this tile is unique, since $M$ is deterministic).
Depending on whether  $\delta(q,u_j) \in Q \times \Gamma \times \{L\}$
or $\delta(q,u_j) \in Q \times \Gamma \times \{R\}$, we have to put
one of the two merging tiles either to the left or to the right of
the action tile. The rest of the row is filled up with alphabet tiles
and the boundary tile $b_7$ ($b_2$, resp.) at position $(0,i)$
($(m,i)$, resp.) (using that only these types of tiles have
$\triangleleft$ on their east side or $\triangleright$ on their west side).
The claims ensure no further tiles may be placed. In case $\delta(q,u_j)=(p,b,L)$, the tiling looks as
in Figure~\ref{machinecomputation}.
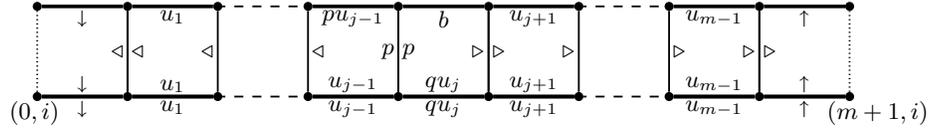
\begin{figure}[h!t!b!p!]
\begin{center}
\setlength{\unitlength}{.8mm}
\begin{picture}(145,25)(-1,10)
 \gasset{Nw=1.5,Nh=1.5,Nframe=n,Nfill=y,AHnb=0,ELdist=.7,linewidth=.6}
    \node[ExtNL=y,NLangle=-100](0)(0,15){$(0,i)$}
    \node(1)(15,15){}
    \node(2)(30,15){}
    \node(3)(45,15){}
    \node(4)(60,15){}
    \node(5)(75,15){}
    \node(6)(90,15){}
    \node(7)(105,15){}
    \node(8)(120,15){}
    \node[ExtNL=y,NLangle=-30](9)(135,15){$(m+1,i)$}
    \node(0')(0,30){}
    \node(1')(15,30){}
    \node(2')(30,30){}
    \node(3')(45,30){}
    \node(4')(60,30){}
    \node(5')(75,30){}
    \node(6')(90,30){}
    \node(7')(105,30){}
    \node(8')(120,30){}
    \node(9')(135,30){}
    \drawedge[ELside=r](0,1){$\shortdownarrow$}
    \drawedge[ELside=r](1,2){$u_1$}
    \drawedge[ELside=r](3,4){$u_{j-1}$}
    \drawedge[ELside=r](4,5){$q u_j$}
    \drawedge[ELside=r](5,6){$u_{j+1}$}
    \drawedge[ELside=r](7,8){$u_{m-1}$}
    \drawedge[ELside=r](8,9){$\shortuparrow$}
    \drawedge(0,1){$\shortdownarrow$}
    \drawedge(1,2){$u_1$}
    \drawedge(3,4){$u_{j-1}$}
    \drawedge(4,5){$q u_j$}
    \drawedge(5,6){$u_{j+1}$}
    \drawedge(7,8){$u_{m-1}$}
    \drawedge(8,9){$\shortuparrow$}
    \drawedge[ELside=r](0',1'){$\shortdownarrow$}
    \drawedge[ELside=r](1',2'){$u_1$}
    \drawedge[ELside=r](3',4'){$pu_{j-1}$}
    \drawedge[ELside=r](4',5'){$b$}
    \drawedge[ELside=r](5',6'){$u_{j+1}$}
    \drawedge[ELside=r](7',8'){$u_{m-1}$}
    \drawedge[ELside=r](8',9'){$\shortuparrow$}
    \gasset{linewidth=.3}
    \drawedge[dash={1.5}{}](2,3){}
    \drawedge[dash={1.5}{}](2',3'){}
     \drawedge[dash={1.5}{}](6,7){}
    \drawedge[dash={1.5}{}](6',7'){}
    \drawedge[dash={0.2 0.5}0](0,0'){}
    \drawedge(1,1'){$\triangleleft$}\drawedge[ELside=r](1,1'){$\triangleleft$}
    \drawedge(2,2'){$\triangleleft$}
    \drawedge[ELside=r](3,3'){$\triangleleft$}
    \drawedge(4,4'){$p$}\drawedge[ELside=r](4,4'){$p$}
    \drawedge(5,5'){$\triangleright$}\drawedge[ELside=r](5,5'){$\triangleright$}
    \drawedge(6,6'){$\triangleright$}
    \drawedge[ELside=r](7,7'){$\triangleright$}
    \drawedge(8,8'){$\triangleright$}\drawedge[ELside=r](8,8'){$\triangleright$}
    \drawedge[dash={0.2 0.5}0](9,9'){}
  \end{picture}
\end{center}
\caption{Simulating a move of the Turing machine\label{machinecomputation}}
\end{figure}
We define $h_{i+1}$ as the sum of $h_i$ and all summands
$\tau_{x,i} \links t(x,i) \rechts$, where $0 \leq x \leq m$ and $t(x,i)$ is the unique tile
that we  put on position $(x,i)$. The tiling sum $g_{i+1}$ is $g_i$ without
these summands $\tau_{x,i} \links t(x,i) \rechts$.
We now have shown that $w$ is accepted by $M$ if there exists a tiling sum $f$
with $f_w+f=0$.

For the other direction, we assume that $w$ is accepted by $M$.
We have to show that there exists a tiling sum $f$ with $f_w+f=0$.
This is much easier than what we have already done.  Since $w$ is accepted
by $M$, there exists a number $m-1$ (the space consumption of $M$ on input
$w$) and sequence of configurations (encoded as before) $C_1, C_2, \ldots, C_N$ (all of length $m+1$)
such that $C_1 = \shortdownarrow \! q_0w \boxempty^{m-n-1} \! \shortuparrow$ is the initial configuration for the input $w$,
$C_N$ is of the form $\shortdownarrow \! q_f \boxempty^{m-1}\! \shortuparrow$, and $M$ moves from $C_i$ to $C_{i+1}$
in one step ($1 \leq i \leq N-1$).
{}From this computation we can build up a tiling in the standard way
(every position receives at most one tile) to obtain the tiling sum $f$,
essentially by reversing the previous argument. Namely, we first add to
$f_w$ the sum
$\sum_{i=n+1}^{m-1}\tau_{i,0}\links b_0\rechts+\tau_{m,0}\links b_1\rechts$
to obtain Figure~\ref{fig initial config}.  Then one continues as per
Figure~\ref{machinecomputation} to build up rows $2$ through $N$.
In this way one obtains a sum $f_w+g$, with $g$ a tiling sum, whose evaluation looks like:
\begin{center}
\setlength{\unitlength}{.8mm}
\begin{picture}(115,10)(0,10)
 \gasset{Nw=1.5,Nh=1.5,Nframe=n,Nfill=y,AHnb=0,ELdist=.7,linewidth=.6}
    \node[ExtNL=y,NLangle=180](0)(0,15){$(0,N)$}
    \node[ExtNL=y,NLangle=0](7)(105,15){$(m+1,N)$}
    \node(1)(15,15){}
    \node(2)(30,15){}
    \node(3)(45,15){}
    \node(4)(60,15){}
    \node(5)(75,15){}
    \node(6)(90,15){}
    \drawedge[ELside=r](0,1){$\shortdownarrow$}
    \drawedge[ELside=r](1,2){$q_f\boxempty$}
    \drawedge[ELside=r](2,3){$\boxempty$}
     \drawedge[dash={1.5}{}](3,4){}
    \drawedge[ELside=r](4,5){$\boxempty$}
    \drawedge[ELside=r](5,6){$\boxempty$}
    \drawedge[ELside=r](6,7){$\shortuparrow$}
  \end{picture}
\end{center}
Finally, we complete the tiling as follows.
\begin{center}
\setlength{\unitlength}{.8mm}
\begin{picture}(145,25)(-1,10)
 \gasset{Nw=1.5,Nh=1.5,Nframe=n,Nfill=y,AHnb=0,ELdist=.7,linewidth=.6}
    \node[ExtNL=y,NLangle=-100](0)(0,15){$(0,N)$}
    \node(1)(15,15){}
    \node(2)(30,15){}
    \node(3)(45,15){}
    \node(4)(60,15){}
    \node(5)(75,15){}
    \node(6)(90,15){}
    \node(7)(105,15){}
    \node(8)(120,15){}
    \node[ExtNL=y,NLangle=-30](9)(135,15){$(m+1,N)$}
    \node(0')(0,30){}
    \node(1')(15,30){}
    \node(2')(30,30){}
    \node(3')(45,30){}
    \node(4')(60,30){}
    \node(5')(75,30){}
    \node(6')(90,30){}
    \node(7')(105,30){}
    \node(8')(120,30){}
    \node(9')(135,30){}
    \drawedge[ELside=r](0,1){$\shortdownarrow$}
    \drawedge[ELside=r](1,2){$q_f\boxempty$}
    \drawedge[ELside=r](3,4){$\boxempty$}
    \drawedge[ELside=r](4,5){$\boxempty$}
    \drawedge[ELside=r](5,6){$\boxempty$}
    \drawedge[ELside=r](7,8){$\boxempty$}
    \drawedge[ELside=r](8,9){$\shortuparrow$}
    \drawedge(0,1){$\shortdownarrow$}
    \drawedge(1,2){$q_f\boxempty$}
    \drawedge(3,4){$\boxempty$}
    \drawedge(4,5){$\boxempty$}
    \drawedge(5,6){$\boxempty$}
    \drawedge(7,8){$\boxempty$}
    \drawedge(8,9){$\shortuparrow$}
    \gasset{linewidth=.3}
    \drawedge[dash={0.2 0.5}0](0',1'){}
    \drawedge[dash={0.2 0.5}0](1',2'){}
    \drawedge[dash={0.2 0.5}0](3',4'){}
    \drawedge[dash={0.2 0.5}0](4',5'){}
    \drawedge[dash={0.2 0.5}0](5',6'){}
    \drawedge[dash={0.2 0.5}0](7',8'){}
    \drawedge[dash={0.2 0.5}0](8',9'){}
    \drawedge[dash={1.5}{}](2,3){}
    \drawedge[dash={1.5}{}](2',3'){}
     \drawedge[dash={1.5}{}](6,7){}
    \drawedge[dash={1.5}{}](6',7'){}
    \drawedge[dash={0.2 0.5}0](0,0'){}
    \drawedge(1,1'){\begin{turn}{45}\text{$\shortleftarrow$}\end{turn}}
    \drawedge[ELside=r](1,1'){\begin{turn}{45}\text{$\shortleftarrow$}\end{turn}}
    \drawedge(2,2'){$\shortleftarrow$}
    \drawedge[ELside=r](3,3'){$\shortleftarrow$}
    \drawedge(4,4'){$\shortleftarrow$}\drawedge[ELside=r](4,4'){$\shortleftarrow$}
    \drawedge(5,5'){$\shortleftarrow$}\drawedge[ELside=r](5,5'){$\shortleftarrow$}
    \drawedge(6,6'){$\shortleftarrow$}
    \drawedge[ELside=r](7,7'){$\shortleftarrow$}
    \drawedge(8,8'){$\shortleftarrow$}\drawedge[ELside=r](8,8'){$\shortleftarrow$}
    \drawedge[dash={0.2 0.5}0](9,9'){}
  \end{picture}
\end{center}
Formally, $f=g+\tau_{(0,N)}\links b_6\rechts+\tau_{(1,N)}\links b_5\rechts+
\sum_{i=2}^{m-1}\tau_{(i,N)}\links b_4\rechts+\tau_{(m,N)}\links b_3\rechts$ is a tiling sum with $f_w+f=0$.
This completes the proof of Theorem~\ref{thm tiling}.
\qed
\end{proof}

We now proceed to the proof of Theorem~\ref{mainund}, thereby establishing
Corollary~\ref{zwrz2undec} and Theorem~\ref{freemetabelian}.  We recall that
if $G$ is a group, then the free $G$-module on a set $X$ can be realized as
the abelian group of all finitely supported functions $f\colon G\times X\to
\mathbb Z$ with pointwise addition and module action
given by $g_0f(g,x) = f(g_0^{-1} g,x)$.

\begin{proof}[Theorem~\ref{mainund}]
The abelian group $M$ of all finitely supported functions from
$\mathscr E\times C$ to $\mathbb Z$ is a free $\mathbb Z\times \mathbb Z$
module of rank $2|C|$ via the translation action.  Indeed, let
us set $r=\{(0,0),(1,0)\}$ and $u=\{(0,0),(0,1)\}$.  Then
$\mathscr E\times C= (\mathbb Z\times \mathbb Z)\times \{r,u\}\times C$
since each horizontal edge is uniquely of the form $(a,b)+r$ and
each vertical edge is uniquely of the form $(a,b)+u$.  A tiling
sum is precisely an element of the subsemimodule $N$ of $M$
generated by the set $\{\links t \rechts\mid t\in T\}$.
Then the zero tiling sum problem is asking exactly whether there
exists $f\in N$ so that $f_0+f=0$, which is equivalent to asking
whether $-f_0\in N$.   Theorem~\ref{thm tiling} provides a fixed tiling system
with undecidable zero tiling sum problem.  Therefore this is a fixed finitely
generated subsemimodule of a fixed free
$(\mathbb Z\times \mathbb Z)$-module with undecidable subsemimodule membership problem.
This completes the proof.
\qed
\end{proof}

\section{Rational subsets of two-dimensional lamplighter groups}

By a \emph{two-dimensional lamplighter group}, we mean a wreath product of the
form $\mathbb Z/n\mathbb Z\wr (\mathbb Z\times \mathbb Z)$ for $n \geq 2$.  In this section,
we show that the rational subset membership problem is undecidable for such
groups.  By an \emph{effective ring}, we mean a unital ring $R$ whose arithmetic can
be represented effectively (like $\mathbb Z$ or $\mathbb Z/n\mathbb Z$).  Let $G$ be a
finitely generated group with generating set $\Sigma$.  The \emph{subset sum
  problem} for a finitely generated $RG$-module $M$ is the following
algorithmic problem.  Given $m\in M$ and a finite subset $F\subseteq M$ of non-zero elements,
determine whether there exist distinct elements $g_1,\ldots, g_n\in G$ and
elements $f_1,\ldots, f_n\in F$ (not necessarily distinct) so that
$m=\sum_{i=1}^n g_if_i$. In the case the answer is ``yes'',
we say that $m$ is a \emph{subset sum} of $F$.   If $F$ is fixed, then we call this the subset sum problem for $F$.

\begin{theorem}\label{subsetsumundec}
Let $R \neq 0$ be an effective ring.  Then there is a free
$R(\mathbb Z\times \mathbb Z)$-module of finite rank
and a fixed finite subset $F$ of non-zero elements so that the subset sum problem for $F$ is undecidable.
\end{theorem}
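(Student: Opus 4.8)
The plan is to reduce from the acceptance problem of the fixed deterministic Turing machine $M$ used in the proof of Theorem~\ref{thm tiling}, reusing its tiling system $T$ essentially verbatim. As in the proof of Theorem~\ref{mainund}, let $M$ denote the free $R(\mathbb Z\times \mathbb Z)$-module of all finitely supported functions $\mathscr E\times C\to R$; it has rank $2|C|$ via the translation action. Take $F=\{\links t\rechts\mid t\in T\}$, where now $\links t\rechts$ is read with values in $R$, the integers $\pm 1$ being interpreted as $\pm 1_R$. Since $R\neq 0$ we have $1_R\neq 0$, so every $\links t\rechts$ is nonzero, as required. The crucial observation is that a subset sum of $F$ is exactly a placement of tiles with \emph{at most one tile per position}: distinct group elements $g_1,\dots,g_k\in\mathbb Z\times \mathbb Z$ correspond to distinct grid positions, and to each chosen position we attach a single $f_i\in F$. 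Given an input word $w$, I would compute the finitely supported function $f_w$ exactly as in the proof of Theorem~\ref{thm tiling} (again reading its $+1$'s in $R$) and claim that $M$ accepts $w$ if and only if $-f_w$ is a subset sum of $F$. As $w\mapsto f_w$ is computable and $F$ is fixed, this reduces the undecidable acceptance problem to the subset sum problem for the fixed set $F$.

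The forward implication is the easy one. If $M$ accepts $w$, the explicit tiling constructed in the second half of the proof of Theorem~\ref{thm tiling} already uses at most one tile per position, so it is literally a subset sum; and since it evaluates to $-f_w$ over $\mathbb Z$, it continues to do so over $R$ under the ring homomorphism $\mathbb Z\to R$.

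For the converse I would re-run the five claims of Theorem~\ref{thm tiling} over $R$, exploiting the subset sum constraint, and the point to verify is that small characteristic produces no spurious cancellations. Claim~1 survives unchanged: for a $\preceq$-minimal tile lying outside the shaded region, its south or west edge carries a color $\neq c_0$ and hence a value $\pm 1_R\neq 0$, and one checks in each case ($y_i<0$, $x_i<0$, or $y_i=0$ with $0\le x_i\le n$) that this edge never lies in the support of $f_w$; thus the value can only be cancelled by an adjacent tile below or to the left, contradicting minimality. With Claim~1 in force, no tile sits in the row $y=0$ at positions $0\le x\le n$, so each edge in the support of $f_w$ receives a contribution from exactly one adjacent tile. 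Consequently, at every pair $(e,c)$ the total value is a sum of at most two terms, one of the form $+1_R$ and one of the form $-1_R$, and never $+1_R+1_R$; such a sum vanishes in $R$ precisely when it vanishes in $\mathbb Z$. Hence a subset sum equal to $-f_w$ forces exactly the same edge matchings as in the integral case, and Claims~2--5 together with the row-by-row induction go through verbatim, showing that the placed tiles encode an accepting computation of $M$ on $w$.

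The main obstacle is precisely this characteristic-independence: a priori, working over a ring such as $\mathbb Z/2\mathbb Z$ could let two coincident $+1$ contributions cancel (since $2\equiv 0$), producing a tiling solution with no integral counterpart and thereby breaking the reduction. The resolution is the interplay between the subset sum restriction (at most one tile per position) and the localization furnished by Claim~1, which together guarantee that every edge-color sees at most one positive and one negative unit, so that no cancellation is possible beyond the genuine geometric matching of tiles.
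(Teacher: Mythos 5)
Your proposal is correct and follows essentially the same route as the paper: reuse the fixed tiling system and the map $w\mapsto f_w$ over $R$, take $F=\{\links t\rechts\mid t\in T\}$, and observe that the subset sum constraint (one tile per position) turns Claim~5 into a hypothesis while the remaining claims go through. Your explicit verification that no spurious cancellations arise in small characteristic --- because after Claim~1 each edge--color pair receives at most one $+1_R$ and one $-1_R$ --- is exactly the point the paper leaves implicit when it says to ``retain the definitions and notation'' over $R$, so this is a welcome amplification rather than a deviation.
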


\begin{proof}
Let $T$ be the fixed tiling system with undecidable zero tiling sum problem
constructed earlier.  We now consider mappings $f\colon \mathscr E\times
C\to R$ instead of mappings to $\mathbb Z$, but otherwise retain the
definitions and notation from the proof of Theorem~\ref{thm tiling}.  The
proof of that theorem shows that $w$ is accepted by the Turing machine if and
only if $-f_w$ is a subset sum of $F=\{\links t\rechts\mid t\in T\}$.  Indeed,
the proof shows that if $w$ is accepted by the Turing machine, then there is a
tiling sum $f$ in which no two tiles are placed in the same position and so
that $f_w+f=0$.  Conversely, if $-f_w$ is a subset sum of $F$, then we can write
$f_w+f=0$ with $f$ a tiling sum never placing two tiles in the same
position. The argument of Theorem~\ref{thm tiling} now shows that
$w$ must be accepted by the Turing machine, the only difference being
that Claim 5 is now an assumption rather than a result that must be proved.
\qed
\end{proof}

We now aim to show that two-dimensional lamplighter groups have undecidable rational subset membership problem.

\begin{proposition}\label{prop undecrat}
Let $R=\mathbb Z/n\mathbb Z$ ($n \geq 2$) and suppose that $M$ is a finite rank free
$R[\mathbb Z\times \mathbb Z]$-module with fixed finite subset $F$ of non-zero elements having an
undecidable subset sum problem.  Then there is a fixed rational
subset of $M\rtimes (\mathbb Z\times\mathbb Z)$ with undecidable membership problem.
\end{proposition}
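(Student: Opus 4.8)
The plan is to reduce the subset sum problem for the fixed set $F$, which is undecidable by Theorem~\ref{subsetsumundec}, to membership in a single fixed rational subset $S$ of $M \rtimes (\mathbb{Z}\times\mathbb{Z})$. Write $G=\mathbb{Z}\times\mathbb{Z}$ with $\Sigma^{\pm}=\{x^{\pm 1},y^{\pm 1}\}$, and recall that in the semidirect product one has $(u,g)(0,a)=(u,ga)$ for $a\in\Sigma^{\pm}$ and $(u,g)(f,1)=(u+g\cdot f,g)$. Thus reading a generator from $\Sigma^{\pm}$ merely moves the ``cursor'' $g\in G$ and leaves the module coordinate untouched, whereas right-multiplying by $(f,1)$ \emph{deposits} the translate $g\cdot f$ of $f$ at the current cursor position. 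Using the encoding from the proof of Proposition~\ref{undecmon}, each $f\in F$ is represented by a fixed word $\beta_f$ over $(\Sigma\cup B)^{\pm}$ (with $B$ a module basis of $M$) satisfying $\pi(\beta_f)=(f,1)$; likewise any query element $m$ is represented by a computable word evaluating to $(m,1)$. The membership question ``is $m$ a subset sum of $F$?'' is thereby transformed into ``is $(m,1)\in S$?''.

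Next I would take $S=\pi(L(\mathscr{A}))$ for a finite automaton $\mathscr{A}$, depending only on $F$ (so that $S$ is fixed), that performs a \emph{monotone sweep} of the grid. The automaton runs through three kinds of phases. In a \emph{repositioning} phase it reads only $x^{\pm 1}$ (free horizontal motion, no deposits); in a \emph{placing} phase it repeatedly either deposits some $\beta_f$ and then reads $x$ (place-then-step-right) or reads $x$ alone, and it leaves the row by reading a single $y$ and passing back to a repositioning phase; a final \emph{free} phase reads arbitrary letters of $\Sigma^{\pm}$ without depositing and carries the accepting states. The decisive structural features are that deposits occur only in placing phases, that within one placing phase the cursor strictly increases in the $x$-direction between successive deposits, and that the $y$-coordinate is constant during a placing phase while strictly increasing each time one placing phase passes to the next. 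Consequently every cell at which a tile is deposited along an accepting run is distinct: two deposits in one row are impossible because the $x$-coordinate strictly increases along the single rightward pass, and deposits belonging to different placing phases lie in rows of different height.

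The equivalence $(m,1)\in S$ if and only if $m$ is a subset sum of $F$ then follows. For soundness, if $(m,1)\in S$ then along an accepting run the deposits occur at pairwise distinct cursor positions $g_1,\dots,g_n$ with tiles $f_1,\dots,f_n\in F$; since repositioning and free phases do not alter the module coordinate, $m=\sum_{i=1}^n g_i\cdot f_i$ with the $g_i$ distinct, i.e.\ $m$ is a subset sum. For completeness, given $m=\sum_{i=1}^n g_i\cdot f_i$ with distinct $g_i$, I order the $g_i$ by height and, within a common height, from left to right; the sweep visits the occupied rows in increasing order, enters each row with a repositioning phase that brings the cursor to the leftmost occupied column, and deposits the required tiles in a single rightward placing pass. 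A concluding free phase reads a word for the inverse of the net displacement, returning the cursor to the origin without changing the module coordinate, so the run evaluates to exactly $(m,1)$.

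Finally, to obtain undecidability I would take $m=-f_w$ for an input word $w$ of the Turing machine. By Theorem~\ref{subsetsumundec}, $w$ is accepted if and only if $-f_w$ is a subset sum of $F$, and Claims~1--4 in the proof of Theorem~\ref{thm tiling} confine every tile position occurring in such a subset sum to the first quadrant, so that all cursor positions may be taken with non-negative coordinates. Hence it suffices to start the sweep at the origin and move only upward through the rows $y\ge 0$, which is exactly the automaton above; no negative coordinates are required. Therefore $(-f_w,1)\in S$ if and only if $w$ is accepted, and undecidability of the latter yields undecidability of membership in the fixed rational subset $S$. I expect the one genuinely delicate point to be the enforcement of the distinctness condition (the analogue of Claim~5, which for subset sums is a hypothesis rather than a conclusion): a finite automaton cannot remember which of infinitely many cells it has already used, and the monotone-sweep discipline is precisely what turns ``at most one tile per cell'' into a purely local, finite-state constraint.
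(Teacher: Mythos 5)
Your proposal is correct and is essentially the paper's own argument: the paper encodes your three-phase monotone sweep as the single rational expression $\{x^{\pm 1},y^{\pm 1}\}^*[(x\cup Fx)^*y(x^{-1})^*]^*\{x^{\pm 1},y^{\pm 1}\}^*$, with the same observation that the strictly rightward motion within a row and strictly upward motion between rows force all deposit positions to be distinct. The only cosmetic difference is that the paper's initial factor $\{x^{\pm 1},y^{\pm 1}\}^*$ allows an arbitrary starting position, so it never needs your final appeal to Claims~1--4 to confine positions to the first quadrant.
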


\begin{proof}
Let $B$ be a basis for $M$ and take as a generating set for $G=M\rtimes
(\mathbb Z\times\mathbb Z)$ the set $B\cup \{x,y\}$ where $x=(1,0)$ and
$y=(0,1)$.  We claim that $m\in M$ is a subset sum of $F$ if and
only if $(m,(0,0))$ belongs to the rational subset
\[L=\{x^{\pm 1},y^{\pm 1}\}^*[(x\cup Fx)^*y(x^{-1})^*]^*\{x^{\pm 1},y^{\pm 1}\}^*.\]
Let us give a high level description of how this works.  The first term
$\{x^{\pm 1},y^{\pm 1}\}^*$ in $L$ lets us move to any position in $\mathbb
Z\times \mathbb Z$.  Then $(x\cup Fx)^*$ lets us move to the right or add an
element of $F$ translated to the current position and then move right. The term
$y(x^{-1})^*$ allows us to move up one row and then move as far left as
needed.  Now we keep repeating until we are done translating elements of $F$
in positions.  Then we use $\{x^{\pm 1},y^{\pm 1}\}^*$ to return
to the origin.  Notice that when following this procedure, a position
can have at most one element of $F$ translated to it.

For instance, suppose
$m=x^{i_1}y^{j_1}f_{i_1,j_1}+\cdots+x^{i_k}y^{j_k}f_{i_k,j_k}$ is a subset sum
of $F$ where $(i_1,j_1)<(i_2,j_2)<\cdots<(i_k,j_k)$ in right lexicographical
order (i.e., $(a,b)<(c,d)$ if $b<d$, or $b=d$ and $a < c$).  Then we begin
with the product $x^{i_1}y^{j_1}$ from $\{x^{\pm 1},y^{\pm 1}\}^*$ to get to
the starting point of our sum.  Then using $[(x\cup Fx)^*y(x^{-1})^*]^*$ we
build up row by row, always going upward, an element of the form $(m,(a,b))$.
Finally we multiply by  $x^{-a}y^{-b}\in \{x^{\pm 1},y^{\pm 1}\}^*$ to obtain
$(m,0)$.  Conversely, any element of the form $(m,0)$ belonging to $L$ must
have $m$ a subset sum of $F$
since the regular expression $L$ never permits you to translate
by the same element of $\mathbb Z\times \mathbb Z$ twice.
\qed
\end{proof}

Now we can argue as before to obtain undecidability for the two-dimensional lamplighther groups.

\begin{theorem}\label{thm lamplighter}
Rational subset membership is undecidable for a fixed rational
subset of $\mathbb Z/n\mathbb Z\wr (\mathbb Z\times \mathbb Z)$ for any $n\geq 2$.
\end{theorem}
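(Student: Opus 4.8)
The plan is to derive Theorem~\ref{thm lamplighter} as an immediate consequence of the machinery already assembled, mirroring exactly the reduction chain used for $\mathbb{Z}\wr(\mathbb{Z}\times\mathbb{Z})$ in Corollary~\ref{zwrz2undec}. The strategy is to chain together three previous results: specialize the subset sum undecidability of Theorem~\ref{subsetsumundec} to the ring $R=\mathbb{Z}/n\mathbb{Z}$, transport this to a rational subset problem via Proposition~\ref{prop undecrat}, and finally embed the resulting semidirect product into the two-dimensional lamplighter group using an index argument analogous to Lemma~\ref{embeds}.

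First I would apply Theorem~\ref{subsetsumundec} with $R=\mathbb{Z}/n\mathbb{Z}$ (which is a nonzero effective ring for every $n\geq 2$) to obtain a free $R(\mathbb{Z}\times\mathbb{Z})$-module $M$ of some finite rank $r$ together with a fixed finite subset $F$ of nonzero elements whose subset sum problem is undecidable. Next, Proposition~\ref{prop undecrat} applies verbatim to this $M$ and $F$, yielding a fixed rational subset $L$ of $M\rtimes(\mathbb{Z}\times\mathbb{Z})$ with undecidable membership problem. The final step is to show that $M\rtimes(\mathbb{Z}\times\mathbb{Z})$ embeds in $\mathbb{Z}/n\mathbb{Z}\wr(\mathbb{Z}\times\mathbb{Z})$ as a subgroup, and that membership in a rational subset of a subgroup $H$ of a group $G$ reduces to membership in a rational subset of $G$.

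The embedding is the analogue of Lemma~\ref{embeds} with $\mathbb{Z}$ replaced by $\mathbb{Z}/n\mathbb{Z}$: if $H$ is a subgroup of $\mathbb{Z}\times\mathbb{Z}$ of index $r$ isomorphic to $\mathbb{Z}\times\mathbb{Z}$ (for instance $r\mathbb{Z}\times\mathbb{Z}$), then $R(\mathbb{Z}\times\mathbb{Z})$ is a free $RH$-module of rank $r$, so the free $RH$-module $M$ of rank $r$ embeds into $R(\mathbb{Z}\times\mathbb{Z})$, and hence
\[
M\rtimes H \leq R(\mathbb{Z}\times\mathbb{Z})\rtimes H \leq R(\mathbb{Z}\times\mathbb{Z})\rtimes(\mathbb{Z}\times\mathbb{Z}) = \mathbb{Z}/n\mathbb{Z}\wr(\mathbb{Z}\times\mathbb{Z}).
\]
Since $H\cong\mathbb{Z}\times\mathbb{Z}$, the group $M\rtimes(\mathbb{Z}\times\mathbb{Z})$ is isomorphic to $M\rtimes H$ and so embeds in the lamplighter group. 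It then remains to observe the general fact that if $H$ is a finitely generated subgroup of a finitely generated group $G$ and $L$ is a rational subset of $H$, then $L$ is also a rational subset of $G$: any finite automaton over a generating set of $H$ can be read as a finite automaton over a generating set of $G$ once each generator of $H$ is expressed as a word in the generators of $G$, and this does not change the subset of $G$ it accepts. Hence the image of $L$ under the embedding is a fixed rational subset of $\mathbb{Z}/n\mathbb{Z}\wr(\mathbb{Z}\times\mathbb{Z})$ whose membership problem is undecidable.

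I expect no genuine obstacle here, since every ingredient is already in place; the only point requiring a small argument is the stability of rational subsets under passage from a finitely generated subgroup to the ambient group, but this is routine automaton bookkeeping. The phrase \emph{``we can argue as before''} preceding the theorem signals that the authors intend precisely this reprise of the Corollary~\ref{zwrz2undec} argument, so the proof should be short.
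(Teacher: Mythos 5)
Your proposal is correct and follows essentially the same route as the paper: apply Theorem~\ref{subsetsumundec} with $R=\mathbb Z/n\mathbb Z$, use Proposition~\ref{prop undecrat} to get a fixed rational subset of $M\rtimes(\mathbb Z\times\mathbb Z)$ with undecidable membership, and embed $M\rtimes(\mathbb Z\times\mathbb Z)$ into $\mathbb Z/n\mathbb Z\wr(\mathbb Z\times\mathbb Z)$ via the rank-$m$ freeness of $R[\mathbb Z\times\mathbb Z]$ over $R[m\mathbb Z\times\mathbb Z]$. Your explicit remark that rational subsets of a finitely generated subgroup remain rational in the ambient group is a point the paper leaves implicit, but it is the same argument.
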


\begin{proof}
Again write $R=\mathbb Z/n\mathbb Z$.  Then $\mathbb Z/n\mathbb Z\wr (\mathbb
Z\times \mathbb Z)=R[\mathbb Z\times \mathbb Z]\rtimes (\mathbb Z\times
\mathbb Z)$.  By Theorem~\ref{subsetsumundec}, there is a free $R[\mathbb
Z\times \mathbb Z]$-module $M$ of rank $m$ with an undecidable subset sum
problem for a fixed finite subset $F$.  Since $R[\mathbb Z\times \mathbb Z]$
is a free $R[m\mathbb Z\times \mathbb Z]$-module of rank $m$, we can embed
$M\rtimes (\mathbb Z\times \mathbb Z)$ in
$\mathbb Z/n\mathbb Z\wr (\mathbb Z\times \mathbb Z)$.
The result now follows from Proposition~\ref{prop undecrat}.
\qed
\end{proof}

As a corollary, it follows that $G\wr (\mathbb Z\times \mathbb Z)$ has an
undecidable rational subset membership problem for any non-trivial group $G$.

\begin{corollary}
 Let $G$ be a non-trivial group.  Then $G\wr (\mathbb Z\times \mathbb Z)$
has an undecidable rational subset membership problem for a fixed rational subset.
\end{corollary}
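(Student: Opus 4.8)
The plan is to reduce the statement to the two undecidability results already established, namely Corollary~\ref{zwrz2undec} (undecidable submonoid, and hence rational subset, membership for $\mathbb Z\wr(\mathbb Z\times\mathbb Z)$) and Theorem~\ref{thm lamplighter} (undecidable rational subset membership for $\mathbb Z/k\mathbb Z\wr(\mathbb Z\times\mathbb Z)$ with $k\geq 2$). First I would pick any element $g\in G$ with $g\neq 1$ and set $H=\langle g\rangle$. Since $H$ is cyclic and nontrivial, it is isomorphic either to $\mathbb Z$ (if $g$ has infinite order) or to $\mathbb Z/k\mathbb Z$ for some $k\geq 2$ (if $g$ has finite order). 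In either case the wreath product $H\wr(\mathbb Z\times\mathbb Z)$ is exactly one of the groups for which undecidability of rational subset membership, for a fixed rational subset, is already known: in the infinite-order case this follows from Corollary~\ref{zwrz2undec} together with the observation that a finitely generated submonoid $\{w_1,\ldots,w_n\}^*$ is itself a rational subset, so that undecidable submonoid membership yields undecidable rational subset membership; in the finite-order case it is immediate from Theorem~\ref{thm lamplighter} applied with $n=k$.

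Next I would exhibit the embedding $H\wr(\mathbb Z\times\mathbb Z)\hookrightarrow G\wr(\mathbb Z\times\mathbb Z)$. Writing $K=\mathbb Z\times\mathbb Z$, the base group of $G\wr K$ is the restricted direct sum $G^{(K)}$, and the inclusion $H\leq G$ induces an inclusion $H^{(K)}\leq G^{(K)}$ of base groups that commutes with the shift action of $K$ permuting coordinates. Hence $H\wr K=H^{(K)}\rtimes K$ sits as a subgroup of $G^{(K)}\rtimes K=G\wr K$. Note that $H\wr K$ is finitely generated: it is generated by $g$ placed at the coordinate $0\in K$ together with the two standard generators $x=(1,0)$ and $y=(0,1)$ of $K$.

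Finally I would transfer undecidability along this embedding. The key principle is that if $L$ is a rational subset of a subgroup $P\leq Q$, then $L$ is also a rational subset of $Q$, since the inclusion $P\hookrightarrow Q$ is a monoid homomorphism and, for an element of $P$, membership in $L$ is the same question whether $L$ is read inside $P$ or inside $Q$. Applying this with $P=H\wr K$ and $Q=G\wr K$, the fixed rational subset producing undecidability in $H\wr K$ becomes a fixed rational subset of $G\wr K$, and the undecidable family of membership instances, all of which are words over the finite generating set of $H\wr K$, transfers verbatim.

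The main point requiring care is exactly this last transfer when $G$ is not finitely generated: the rational subset membership problem for $G\wr K$ then has no single finite generating set for the whole group. This causes no difficulty, however, because both the fixed rational subset and all of the inputs we feed it lie inside the finitely generated subgroup $H\wr K$ and are presented as words over the finite alphabet $\{g,x,y\}^{\pm}$. Thus the reduction remains entirely within a finitely generated subgroup, the algorithmic problem is well posed over a finite alphabet, and its undecidability is inherited directly from the corresponding result for $H\wr K$. This completes the plan.
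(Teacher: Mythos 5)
Your proposal is correct and follows exactly the paper's (one-line) argument: $G$ contains a non-trivial cyclic subgroup $H\cong\mathbb Z$ or $H\cong\mathbb Z/k\mathbb Z$, so $G\wr(\mathbb Z\times\mathbb Z)$ contains a copy of $\mathbb Z\wr(\mathbb Z\times\mathbb Z)$ or of $\mathbb Z/k\mathbb Z\wr(\mathbb Z\times\mathbb Z)$, and the fixed rational subset witnessing undecidability there transfers to the ambient group. Your additional remarks (that a finitely generated submonoid is a rational subset, and that the non-finitely-generated case of $G$ is harmless because everything lives in the finitely generated subgroup $H\wr(\mathbb Z\times\mathbb Z)$) are correct fillings-in of details the paper leaves implicit.
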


\begin{proof}
Either $G\wr (\mathbb Z\times \mathbb Z)$ contains a copy of
$\mathbb Z\wr (\mathbb Z\times \mathbb Z)$ or of $\mathbb Z/n\mathbb Z\wr
(\mathbb Z\times \mathbb Z)$.
\qed
\end{proof}

The argument of Theorem~\ref{thm lamplighter} can be adapted to show that membership is undecidable for a fixed rational subset of the free group of rank 2 in the variety $\mathfrak{A}(n) \cdot \mathfrak{A}$, where
$\mathfrak{A}(n)$ (resp. $\mathfrak{A}$) is the variety of abelian groups
of exponent $n$ (resp. of all abelian groups).  The adaptations are entirely analogous to those used in going from submonoid membership for $\mathbb Z\wr (\mathbb Z\times \mathbb Z)$ to submonoid membership for the free metabelian group of rank $2$.

\bibliographystyle{abbrv}
\bibliography{bib}

\end{document}